\newcommand\ISH[1]{}
\newcommand\AK[1]{}
\newcommand\store[1]{}
\newtheorem{theorem}{Теорема}
\newtheorem{nontheorem}{Теорема}
\newtheorem{lemma}[nontheorem]{Лемма}
\newtheorem{proposition}[nontheorem]{Предложение}
\newtheorem{corollary}[nontheorem]{Следствие}
\newtheorem*{corollary*}{Следствие}
\newtheorem{question}{Вопрос}
\newtheorem*{conjecture*}{Гипотеза}
\theoremstyle{definition}
\newtheorem{definition}[nontheorem]{Определение}
\newcommand\cone[2]{{#1}\langle #2 \rangle}
\newcommand\abs[1]{|#1|}
\newcommand\un[1]{\underline{#1}}
\def\emp{\varnothing}
\newcommand\str[1]{{\bf S}(#1)}
\def\subgen{\sqsubseteq}
\newcommand\logicts[1]{{\textsc{#1}}}
\newcommand{\LK}[1]{\logicts{K#1}}
\newcommand{\LS}[1]{\logicts{S#1}}
\newcommand\framets[1]{{#1}}
\newcommand\modelsts[1]{\framets{#1}}
\newcommand\hide[1]{}
\newcommand\temp[1]{{#1}^\pm}
\renewcommand{\ge}{\geqslant}
\renewcommand{\le}{\leqslant}
\renewcommand{\geq}{\geqslant}
\renewcommand{\leq}{\leqslant}
\def\vf{\varphi}
\def\mM{\modelsts{M}}
\def\AA{\forall}
\def\EE{\exists}
\def\iff{\Leftrightarrow}
\def\dts{\dots}
\def\Di{\lozenge}
\def\imp{\rightarrow}
\def\Imp{\Rightarrow}
\def\Lvar{\logicts{L}}
\def\mo{\vDash}
\newcommand{\set}[1]{\left\{#1\right\}}
\newcommand{\ZZ}{\mathbb{Z}}
\def\ov{\overline}
\def\restr{{\upharpoonright}}
\newcommand{\clF}{\mathcal{F}}
\newcommand{\clG}{\mathcal{G}}
\newcommand{\clA}{\mathcal{A}}
\newcommand{\clB}{\mathcal{B}}
\newcommand{\clC}{\mathcal{C}}
\newcommand{\Log}{\mathop{Log}}
\def\Dim{\Di^{\leq m}}
\def\Boxm{\Box^{\leq m}}
\def\h{\mathop{h}}
\def\l{\mathop{l}}
\def\subgen{\sqsubseteq}
\def\nin{\notin}
\def\con{\wedge}
\def\mo{\models}
\def\trm{\mathop{trans}}
\date{}
\title{О разбиениях шкал Крипке конечной высоты}
\author{
  Кудинов А. В. \\
%  \begin{affiliation}
    ИППИ РАН
%  \end{affiliation}\\
%  \email{kudinov@iitp.ru}
  \and
  Шапировский И. Б. \\
%  \begin{affiliation}
    ИППИ РАН
%  \end{affiliation}\\
%  \email{shapir@iitp.ru}
}
\begin{document}

\maketitle

\hide{УДК 510.643}
%MSC: 03B45

%\listoftodos

\begin{abstract}
В работе доказана финитная аппроксимируемость и разрешимость одного семейства модальных логик.
Бинарное отношение $R$ назовём предтранзитивным, если $R^*=\cup_{i\leq m} R^i$ для некоторого $m\geq 0$,
где $R^*$ --- транзитивное рефлексивное замыкание $R$.
Под высотой шкалы $(W,R)$  будем понимать высоту предпорядка $(W,R^*)$.
Описаны специальные разбиения (фильтрации) предтранзитивных шкал конечной высоты, из чего следует финитная аппроксимируемость и разрешимость их модальных логик.
\hide{
The paper proves finite model property and decidability for a family of modal logics.
A binary relation $R$ is called pretransitive, if $R^*=\cup_{i\leq m} R^i$ for some $m\geq 0$,
where $R^*$ is the transitive reflexive closure of $R$.
By the height of $(W,R)$ we mean the height of the preorder $(W,R^*)$.
Special partitionings (filtrations) are described for pretransitive frames of finite height,
that\ISH{which?} implies finite model property and decidability of modal logics of these frames.}
\end{abstract}

%\smallskip \noindent \textbf{Ключевые слова:}
%модальная логика, финитная аппроксимируемость, разрешимость, предтранзитивное отношение, конечная высота

%\smallskip \noindent \textbf{Keywords:}
%modal logic, finite model property,  decidability, pretransitive relations, finite height

\hide{  
Отношение $R$ назовём предтранзитивным, если $ $ для некоторого $m$.
В рассматриваются модальные логики предтранзитивных шкал конечной высоты.
Для ряда таких логик доказана финитная аппроксимируемость и алгоритмическая разрешимость. В частности, показано, что
$(m,n)$-шкалы конечной высоты ... 

Хорошо известно, что логики предпорядков конечной высоты являются финитно аппроксимируемыми и разрешимыми. Мы рассматриваем
так называемые предтранзитивные отношения, то есть те, в которых транзитиное замыкание которых содержится

Финитная аппроксимируемость логик $(m,n)$-шкал является старой открытой проблемой.
Нам удалось построить подходящие разбиения для случая предтранзитивных $(m,n)$-шкал конечной высоты. 

}

\section{Введение и основные результаты}
Язык пропозициональной модальной логики --- это язык классической логики высказываний с дополнительными связками.
%модальностями)
Несмотря на простоту, он оказывается эффективным средством описания свойств отношений.
Возникающие при этом теории зачастую обладают лучшими свойствами
(такими, например, как алгоритмическая разрешимость или невысокая сложность),
чем соответствующие теории первого порядка.
%, в частности, во многих случаях они\ISH{Кто ``они'' - нет ли тут двусмысленности?} оказываются алгоритмически разрешимы.
Это обусловило широкое применение модальных логик в информатике (см., например, \cite{BaaderDL2003,Vardi06automata}), а также
в других областях математической логики: при изучении фрагментов предикатных логик, в теории доказательств, теории множеств, алгебраической логике (см., например, \cite{ModalMogel,BoolosBook,SmullFitt,venema-cyl1995}).
%В течение последних 20 лет ведется активное исследование разрешимости и построение эффективных разрешающих процедур для модальных логик. Современное состояние %исследований наиболее полно отражено в справочной книге [Handbook of Modal Logic, edited by P. Blackburn, F. Wolter and Johan van Benthem, Elsevier, 2007], а %также в серии  Advances in Modal Logic (http://www.aiml.net/volumes/).
\store{
Помимо задач, пришедших из приложений, траляля, есть ещё свои собственные траляля. Одна такая связана с финитной аппроксимируемостью и разрешимостью ....
}

%\paragraph{Исторические замечания.}\ISH{Очень плохо}
\paragraph{Алгебраические и реляционные модели.}
%\paragraph{Исторические сведения.}
%\bigskip
Первые модальные исчисления возникли в 1930-х гг. как формализация понятия возможной истинности \cite{Lewis-Langford}.
Довольно быстро выяснилось, что модальные системы имеют естественную алгебраическую интерпретацию, при которой логика оказывается эквациональной теорией некоторой
алгебры:  выводимость формулы в исчислении
соответствует истинности тождества в алгебре.
Центральными объектами оказались {\em модальные алгебры} --- булевы алгебры с дополнительными аддитивными операциями,
и соответствующие теории ---
{\em модальные логики} (более точно ---  {\em нормальные пропозициональные модальные логики}).
Например, модальной алгеброй является алгебра замыкания топологического пространства ---
булева алгебра его подмножеств с дополнительной одноместной операцией, отображающей множество в его замыкание.\ISH{Нужен этот пример?}

Изучение модальных алгебр и их теорий началось с 1940-х годов в работах Тарского, МакКинси, Йонссона и других:
в \cite{McKinsey-Tarski48} две из описанных в \cite{Lewis-Langford} дедуктивных системы  ($\LS{4}$ и $\LS{5}$)
были рассмотрены как теории алгебр замыкания и монадических алгебр;
\store{
Были описаны первые теории: теория класса алгебр замыкания и теория класса монадических алгебр \cite{McKinsey-Tarski48}
\ISH{ТАВТОЛОГИЯ}
(ими оказались два из предложенных в \cite{Lewis-Langford} исчисления,
которые традиционно обозначаются $\LS{4}$ и $\LS{5}$);
}
в \cite{McKinsey41} были доказаны первые результаты о разрешимости модальных логик.
В работе \cite{JonssonTarski} Йонссоном и Тарским для модальных алгебр
была доказана  теорема о представлении --- обобщение теоремы Стоуна о представлении булевых алгебр.

\ISH{Это --- после Крипке?}
В отличие от булевых алгебр,
имеющих в нетривиальном случае общую эквациональную теорию (классическую логику высказываний),
модальные алгебры приводят к более разнообразным теориям:
уже в случае с  единственной \ISH{одноместной} модальной операцией возникает континуум различных модальных логик \cite{JankovContin68}.
\ISH{Проверить ссылку.}
Из этого, в частности, следует, что среди модальных логик имеются алгоритмически неразрешимые.

\bigskip

Принципиально важной для дальнейшего развития модальной логики оказалась {\em реляционная семантика}, или {\em семантика Крипке}, предложенная \ISH{обнаруженная, описанная} в
конце 1950-х гг.
\store{Основные приложения модальной логики связаны с {\em реляционной семантикой}, или {\em семантикой Крипке}, возникшей в конце 1950-х гг.}
Непустое множество с отношениями на нём называется {\em шкалой (Крипке)}.
\store{(в контексте модальной логики множества с отношениями называют шкалами Крипке).}
Нас будет в основном интересовать случай, когда в шкале имеется единственное бинарное отношение.
Такой шкале $(W,R)$ соответствует модальная алгебра $A(W,R)$ --- булева алгебра всех подмножеств $W$ с дополнительной одноместной операцией $\Di_R$:
для $V\subseteq W$  $\Di_R(V)$ --- прообраз $V$ относительно $R$:
 $$\Di_R(V)=\{w \mid \EE v\in V ~wRv\}.$$
Модальная логика шкалы или класса шкал --- это \ISH{, фактически,}эквациональная теория соответствующих \ISH{модальных} алгебр.
\store{
{\em Модальная логика класса шкал} --- это эквациональная  теория соответствующего класса алгебр.
{\em Модальная логика шкалы} $(W,R)$ --- это совокупность всех тождеств алгебры $A(W,R)$, то есть её
 эквациональная теория.
Аналогично, {\em модальная логика класса шкал} --- это эквациональная  теория соответствующего класса алгебр.
}

Интерпретация модальных формул на множествах с отношениями
привела к возникновению разнообразных приложений модальной логики.
\store{обусловила \ISH{самое разнообразное; многочисленное} разнообразное применение модальных логик в приложениях.
\ISH{Отвратительно пафосная фраза}}
\store{Вкратце упомянем некоторые из них.}
Так, любая реляционная структура является моделью языка первого порядка.
Это позволяет смотреть на модальные логики как на фрагменты первопорядковых теорий;  во многих случаях
такие фрагменты оказываются разрешимыми \cite{ModalMogel}.
\store{Возникли применения модальных логик в информатике: }
Шкалу $F$ (с оценкой в $A(F)$) можно рассматривать как систему переходов некоторой вычислительной системы;
в этом случае модальные формулы описывают свойства \ISH{(``поведение'')} вычислительного процесса \cite{Vardi06automata}.
\ISH{Guarded?}
Ещё одно важное применение модальные логики находят в теории доказательств.
Уже в 1930-х годах  Гёделем  было предложено использовать модальные операторы для описания доказуемости и непротиворечивости
в формальной арифметике \cite{Goedel33}. Полная аксиоматика арифметической доказуемости в модальном языке была построена (существенно позже)
Соловеем \cite{Solovay76}; в реляционной семантике эта логика задаётся
конечными  \ISH{строгими} частичными порядками, из чего следует её разрешимость.

\store{
Уже в 1930-х годах  Гёделем  было предложено использовать модальные операторы для описания доказуемости и непротиворечивости
в формальной арифметике \cite{}. Этот вопрос был решен лишь в 1976 г. Соловеем \cite{Solovay76}:
была построена полная конечная аксиоматика арифметической доказуемости в модальном языке (в реляционной семантике эта логика задаётся
конечными  \ISH{строгими} частичными порядками, из чего следует её разрешимость). \ISH{С тех пор там Proof Theory и живёт}
}

\store{
Существенным (революционным) с точки зрения приложений модальной логики оказалась семантика Крипке.

В первую очередь полные по Крипке исчисления оказались важны с точки зрения приложений. Обусловили...

Ещё одно принципиально важное применение модальных логик связано с теорией доказательств.

the way programs behave and to express dynamical properties of transitions between states

Пропозициональная логика программ ---.

В ... был доказан знаменитая теорема Соловея: была доказана полнота относительно арифметической ....
Так, если модальный оператор рассматривать как доказуемость в формальной арифметике,
Необходимо упомянуть также одно из самых важных приложений модальный логики ---... Отмечалась ещё Гёделем, ....
где модальные логики рассматривались в контексте теории доказательств.

}

\store{
Важнейшим свойством модальных логик является алгоритмическая разрешимость.
Во многих случаях разрешимость   удаётся установить с помощью
{\em финитной аппроксимируемости}. В настоящей работе мы доказываем финитную аппроксимируемость ряда логик.

В настоящей работе нас будет интересовать алгоритмическая разрешимость модальных логик....

В настоящей работе доказывается финитная аппроксимируемость и алгоритмическая разрешимость ряда модальных логик.
}

\store{
Таким образом, мы рассматриваем одномодальные полные по Крипке логики
Несколько более наглядное определение логики шкалы (и более привычное для специалистов по модальным логикам) можно дать в терминах {\em моделей Крипке} мы %приводим дадим --- в терминах {\em моделей Крипке}.

Важным свойством модальных логик является финитная аппроксимируемость. В частности, во многих случаях из неё немедленно следует
алгоритмическая разрешимость.
}

%\bigskip
\paragraph*{Финитная аппроксимируемость и разрешимость.}
Вопрос об алгоритмической разрешимости --- один из ключевых при изучении модальных логик.
Во многих случаях разрешимость следует из финитной аппроксимируемости (теорема Харропа).
\store{Важным свойством модальных логик является финитная аппроксимируемость.

В частности, во многих случаях из неё немедленно следует
алгоритмическая разрешимость.}
{\em Финитно аппроксимируемая логика} --- это логика некоторого класса конечных шкал
(это соответствует финитной аппроксимируемости её свободной алгебры).
Финитная аппроксимируемость модальных логик систематически изучалась с середины 1960-х годов \cite{LemmonScott1966,Segerberg1968,Gabbay:1972:JPL}.
Известно, что многие логики обладают этим свойством.
Логики без финитной аппроксимируемости тоже существуют (более того --- их континуум, см., например, \cite[Theorem 6.1]{Ch:Za:ML:1997}), но в случае с единственным
модальным оператором они
скорее экзотика и обыкновенно строятся искусственно.  В то же время, имеется ряд естественным образом задаваемых логик, вопрос о финитной
аппроксимируемости (и разрешимости) которых открыт.  \store{Чуть ниже мы приведём примеры таких логик.}

По-видимому, самым известным таким  вопросом является финитная аппроксимируемость и разрешимость логик {\em $(m,n)$-шкал}, то есть шкал, в которых отношение удовлетворяет условию  $R^n\subseteq R^m$.
%$$\underbrace{R\circ \dts \circ R}_{n \textrm{ раз}}   \subseteq \underbrace{R\circ \dts \circ R}_{m \textrm{ раз}}.$$
Например, таким условием является транзитивность $R\circ R\subseteq R$. В этом случае ($m=1$, $n=2$),
как и во всех случаях, когда один из параметров меньше $2$,
или же в тривиальном случае
$m=n$, финитная аппроксимируемость и разрешимость известны с начала 1970-х годов \cite{Gabbay:1972:JPL}. Финитная
аппроксимируемость и разрешимость в остальных случаях являются старой открытой проблемой, см. \cite[Problem 11.2]{Ch:Za:ML:1997},
\cite[Problem 6]{ModalDecWZ} (в последней формулировке эта
задача названа одной из наиболее интересных задач модальной логики --- дословно,
“\textit{one of the most intriguing open problems in Modal Logic}”). Ответ неизвестен ни для каких $m,n>1$, $m\neq n$.

Класс всех $(m,n)$-шкал обозначим $\clF(m,n)$.
\begin{question}
При каких $m,n$ логика класса $\clF(m,n)$ финитно аппроксимируема? Разрешима?
\end{question}

Ещё один пример логик, финитная аппроксимируемость и разрешимость которых неизвестна, дают так называемые {\em предтранзитивные шкалы}, то есть шкалы, в которых для некоторого $m\geq 0$ имеется свойство {\em $m$-транзитивности}:
$R^*=\cup_{i\leq m} R^i.$
Ответ положителен при $m=0$ и $m=1$, в остальных случаях вопрос открыт.

$\clG(m)$ обозначает класс всех $m$-транзитивных шкал.
\begin{question}
При каких $m$ логика класса $\clG(m)$ финитно аппроксимируема? Разрешима?
\end{question}

\bigskip
Финитную аппроксимируемость в ряде случаев можно установить путём построения специальных разбиений шкал.

Пусть $\clB$ --- некоторое разбиение множества $W$, $R$ --- бинарное отношение на $W$. На элементах разбиения $\clB$ определим отношение
$R_\clB$, положив для $U,V\in \clB$   $$U ~R_\clB~ V \iff \exists u\in U ~\exists v \in V ~u R v.$$
Известен следующий факт:
%(см., например, \cite{...})
логика класса шкал $\clF$ оказывается финитно аппроксимируемой, если для любого конечного разбиения $\clA$
любой шкалы $(W,R)\in \clF$ найдется его конечное измельчение $\clB$ такое, что $(\clB,R_\clB)\in \clF$.
В этом случае говорят, что $\clF$ {\em допускает минимальные фильтрации}.

Приведём ещё одно достаточное условие финитной аппроксимируемости.
Разбиение $\clB$ множества $W$ назовём {\em правильным}, если для любых $U,V\in \clB$ выполнено следующее условие:
$$\EE u\in U ~\EE v\in V ~uRv ~~\Imp ~~ \AA u\in U ~\EE v\in V ~uRv.$$
Шкала называется {\em правильно измельчаемой}, если у любого её конечного разбиения существует правильное конечное измельчение.
Логика всякого класса правильно измельчаемых шкал является финитно аппроксимируемой.

В случае, когда $\clF$ --- класс всех шкал некоторой логики, второе условие является более сильным: если все шкалы из $\clF$ измельчаемы, то этот класс допускает минимальные фильтрации.

Заметим, что приведённые выше условия не требуют изучения аксиоматики или каких-то других синтаксических свойств рассматриваемой логики,
и позволяют получать результаты о финитной аппроксимируемости чисто семантически.
\ISH{Надо это вообще? Вроде это общее место  в общем алгебраическом контексте.}

%\bigskip
\paragraph{Основные результаты.}
В настоящей работе строятся разбиения предтранзитивных шкал {\em конечной высоты}.
\store{Нас будут интересовать разбиения {\em шкал конечной высоты}.}
Частичный порядок имеет {\em конечную высоту} $h$, если $h$ --- наибольшая из мощностей цепей в этом порядке.
С каждой шкалой $(W,R)$ можно естественным образом связать частичный порядок, элементами которого будут классы эквивалентности
по $\sim_R$:
\[
w \sim_R v \iff w R^* v \mbox{ и } v R^* w,
\]
где  $R^*$ --- транзитивное рефлексивное замыкание $R$; эти классы упорядочены отношением $\leq_R$:
$[x] \leq_R [y] \iff xR^*y$.
{\em Высота шкалы} --- это высота соответствующего ей частичного порядка.
%\ISH{может быть, через максимальные подмножества $W$, в которых отношение $R^*$универсально? Или и так нормально?.}

В теореме \ref{thm:finite_depth_part} установлено, что всякая шкала конечной высоты, в которой все
классы эквивалентности $\sim_R$ конечны и ограничены в совокупности, является правильно измельчаемой.
Эта ключевая теорема позволяет доказывать устойчивость относительно минимальных фильтраций  различных классов шкал.

%С помощью этой ключевой теоремы мы доказываем устойчивость относительно минимальных фильтраций некоторых специальных классов шкал.
Заметим, что при $n>m$ $(m,n)$-шкала является $(n-1)$-транзитивной.
Нам удалось построить нужные разбиения для всех шкал из классов $\clF(m,n)$ и $\clG(m)$, $n > m\geq 1$, высота которых конечна.
Пусть $\clF(m,n,h)$ --- все шкалы высоты не более $h$ из класса $\clF(m,n)$, $\clF_*(m,n)$ --- все $(m,n)$-шкалы конечной высоты.
Аналогично определим классы $m$-транзитивных шкал конечной высоты $\clG(m,h)$ и $\clG_*(m)$.
%Пусть $\clF(m,n,h)$ и $\clG(m,h)$ --- все шкалы высоты не более $h$ из классов $\clF(m,n)$ и $\clG(m)$, соответственно.
%Пусть ещё $\clF_*(m,n)$ и $\clG_*(m)$ --- все шкалы конечной высоты из классов $\clF(m,n)$ и $\clG(m)$, соответственно\ISH{неуклюже}.

Теорема \ref{thm:fmp-two-classes} устанавливает  финитную аппроксимируемость логик классов $\clF(m,n,h)$,  $\clG(m,h)$,
$\clF_*(m,n)$ и $\clG_*(m)$ при $n > m\geq 1$, $h\geq 1$.
%Из этого немедленно следует следующий факт:
\begin{corollary*} Для $n > m\geq 1$,
логика класса $\clF(m,n)$ финитно аппроксимируема тогда и только тогда, когда она совпадает с логикой класса $\clF_*(m,n)$.
Аналогичный критерий имеет место для логик классов $\clG(m)$.
\end{corollary*}

%\begin{corollary*} Пусть $n > m\geq 1$.
%\begin{itemize}
%\item Логика класса $\clF(m,n)$ финитно аппроксимируема тогда и только тогда, когда она совпадает с логикой класса $\clF_*(m,n)$.
%\item Логика класса $\clG(m)$ финитно аппроксимируема тогда и только тогда, когда она совпадает с логикой класса $\clG_*(m)$.
%\end{itemize}
%\end{corollary*}

\bigskip
Дальнейшие результаты связаны с исследованием логик предтранзитивных шкал  как дедуктивных исчислений.

Теорема \ref{thm:Glivenko} является аналогом теоремы Гливенко для предтранзитивных логик.
Напомним, что в семантике Крипке интуиционистская логика --- это логика  частичных порядков, а классическая --- логика порядков высоты 1;
теорема Гливенко утверждает, что выводимость формулы $\vf$ в классической логике высказываний равносильна выводимости $\neg\neg\vf$ в интуиционистской логике.
Теорема \ref{thm:Glivenko} описывает аналогичную сводимость для предтранзитивных логик.

Теорема \ref{thm:compl} описывает модальные аксиоматики для классов предтранзитивных шкал конечной высоты. В частности, из неё
вытекает разрешимость логик $\clF(m,n,h)$ и $\clG(m,h)$ для всех $n > m\geq 1$, $h\geq 1$.

%Из этой сводимости мы
%и получим необходимое условие финитной аппроксимируемости и разрешимости (сформулировать?).

%(заметим, что $(m,n)$-шкалы являются предтранзитивными при $n>m$).
%\ISH{Закончить тут? А про Гливенко?}

\medskip
Работа организована следующим образом. В разделе \ref{sec:prel}
содержатся некоторые предварительные сведения.
В разделе \ref{sect:part} строятся правильные разбиения шкал конечной высоты.
В разделе \ref{sect:twofam} строятся разбиения шкал конечной высоты из классов $\clF(m,n)$ и  $\clG(m)$.
В разделе \ref{sect:ax} рассматриваются вопросы модальной аксиоматизации.
Обсуждение результатов и некоторые их следствия приведены в разделе \ref{sect:cor}.

\hide{
=====
Это свойство выражается некоторой модальной формулой. В логиках, содержащих формулу предтранзитивности, выражается свойство конечной высоты шкалы.
Финитная аппроксимируемость конечно аксиоматизируемой логики даёт её разрешимость. Полные

В теореме \ref{} установлена разрешимость логик $\clF(m,n,h)$,  $\clG(m,h)$, для всех $n > m\geq 1$, $h\geq 1$.

Для логик $m$-транзитивных логик мы докажем вариант теоремы Гливенко: аналогично тому, как классическая логика высказываний сводится к интуиционистской,
$m$-транзитивная логика сводится к своему расширению формулы высоты 1 (напомним, что в семантике Крипке для интуициониских логик
интуициониская логика --- это логика всех частичных порядков, а классическая --- логика тривиальных порядков --- высоты 1). Из этой сводимости мы
и получим необходимое условие финитной аппроксимируемости и разрешимости (сформулировать?).

}

\section{Предварительные сведения}\label{sec:prel}

\subsection{Язык и семантика}
Множество \emph{модальных формул} строится из счетного множества
переменных $PV=\set{p_1,\, p_2,\, \ldots
}$ и констант $\bot$ ({\em ``ложь''}), $\top$ ({\em ``истина''}) с помощью  связок $\land$, $\lor$, $\lnot$, $\to$, $\leftrightarrow$ и
одноместной связки $\Di$ ({\em``ромб''}).

Дедуктивное понятие модальной логики мы введём позже, в разделе \ref{sect:ax}, где
рассматриваются вопросы аксиоматизации. Для нужд этого и следующих двух разделов нам будет достаточно понятия {\em логики шкал Крипке}.

\emph{Шкалой (Крипке)} называется пара $(W,R)$, где $W$ "---
непустое множество и $R \subseteq W\times W$. \emph{Оценкой} на
шкале называется отображение $\theta: PV\to \mathcal{P}(W),$ где
$\mathcal{P}(W)$ обозначает множество всех подмножеств $W$.
\emph{Моделью}  $M$ над шкалой $(W,R)$ называется тройка
$(W,R,\theta)$, где $\theta$ "--- оценка на $(W,R)$.
Истинность модальной формулы в точке модели определяется индукцией по длине формулы и
обозначается $M,w\mo \vf$:  для переменной $p$ полагаем $M,w\mo p  ~\iff ~ w\in \theta(p)$;
булевы связки интерпретируются стандартно, например, $M,w\mo \neg\vf ~\iff  ~\mM,w\not\mo \vf$;
$$\mM,w\mo \Di \vf  ~\iff ~  \EE v (wRv \textrm{ и } M,v\mo \vf).$$
Формула $\vf$ \emph{истинна в модели}
$M$, если она истинна в любой точке модели $M$; $\vf$
\emph{общезначима в шкале} $F$, если она истинна в любой модели над
$F$; $\vf$ \emph{общезначима в классе шкал} $\clF$, если $\vf$
общезначима в каждой шкале из $\clF$; в обозначениях: $M\mo\vf$,
$F\mo\vf$, $\clF\mo\vf$ соответственно.
%Если в шкале $F$ общезначимы все формулы из некоторого множества формул $\Phi$, то будем говорить, что  $F$ --- {\em $\Phi$-шкала}.
Для множества формул $\Phi$, $F\mo \Phi$ означает $F \mo\vf$ для всех $\vf\in \Phi$; в этом случае будем говорить, что  $F$ --- {\em $\Phi$-шкала}.
Аналогично понимаются записи $M,x\mo\Phi$, $M\mo\Phi$, $\clF\mo\Phi$.

Множество всех формул, общезначимых в классе $\clF$, обозначается $\Log \clF$ и называется {\em логикой\footnote{Во вводном разделе логика шкал определялась как эквациональная теория.
Строго говоря, имеется формальное различие между языком модальных формул и языком тождеств модальной сигнатуры, однако оно несущественно:
общезначимость модальной формулы $\vf$ в шкале $(W,R)$ равносильна тому, что в алгебре $A(W,R)$ при любой оценке значение $\vf$ совпадает с $W$ --- единицей алгебры;
то, что в алгебре $A(W,R)$ имеет место тождество $\vf=\psi$, означает общезначимость в шкале $(W,R)$ модальной формулы $\vf\leftrightarrow\psi$.
}
 $\clF$}.
%Если класс состоит из одной шкалы, пишем $\Log(F)$

\hide{
Во вводном разделе мы дали более лаконичное, хотя и менее наглядное определение логики шкал. Легко убедиться, что эти определения эквивалентны: общезначимость модальной формулы $\vf$ в шкале $(W,R)$ равносильна тому, что в алгебре $A(W,R)$ при любой оценке значение $\vf$ совпадает с $W$ --- единицей алгебры.
(Имеется формальное различие между языком модальных формул и языком тождеств модальной сигнатуры, однако оно несущественно в силу следующего тривиального соображения: то, что в алгебре $A(W,R)$ имеет место тождество $\vf=\psi$, означает общезначимость в шкале $(W,R)$ модальной формулы $\vf\leftrightarrow\psi$).
\ISH{Нужно ли это замечание?}}

%Заметим ещё, что выполнение(?..) тождества $\psi=\phi$ равносильно общезначимости формулы $\psi\iff\phi$; это позволяет говорить о модальных логиках в %терминах эквациональных теорий. [Оно надо? В footnote?]

\subsection{Разбиения и фильтрации}

Логики классов конечных шкал называются {\em финитно аппроксимируемыми}.

Формула называется \emph{выполнимой в шкале} $F$ ({\em в классе шкал} $\clF$), если
она истинна в некоторой точке некоторой модели над $F$ (над некоторой
шкалой из $\clF$).

Финитная аппроксимируемость $\Lvar=\Log \clF$  эквивалентна тому, что
всякая выполнимая в $\clF$ формула выполнима в некоторой конечной $\Lvar$-шкале.
Одним из способов нахождения такой шкалы является метод фильтраций.

%Пусть $\clA$ и $\clB$ --- разбиения $W$.
%$\clB$ называется {\em измельчением} $\clA$, если $\AA B\in\clB~\EE A\in\clA~B\subseteq A$, то есть если эквивалентность $\sim_\clB$ {\em сильнее} %эквивалентности $\sim_\clA$: $\sim_\clB~\subseteq ~\sim_\clA$.

%Для отношения эквивалентности $\sim$ на $W$ и точки $x\in W$, $\sim$-класс точки $x$ будем обозначать $[x]_\sim$
%Если $\sim$ и $\approx$ --- отношения эквивалентности на $W$, и $\sim_\clB~\subseteq~\sim_\clA$.
%Пусть $\clA$ и $\clB$ --- разбиения $W$. $\clB$ называется {\em измельчением} $\clA$, если $\AA B\in\clB~\EE A\in\clA~B\subseteq A$, то есть если $\sim_\clB$ %{\em сильнее} $\sim_\clA$: $\sim_\clB~\subseteq ~\sim_\clA$.
%где $[u]_\sim$ обозначает $\sim$-класс точки $u$.

\begin{definition}\label{def:epifiltr}
Рассмотрим шкалу $F = (W,R)$ и отношение эквивалентности $\sim$ на $W$. {\em Минимальной фильтрацией по $\sim$} (или \emph{$\sim$"=фильтрацией) {\em шкалы} $F$} называется шкала
$F/{\sim} = (W/{\sim}, R/{\sim})$, где для $U,V\in W/{\sim}$ полагаем
$$ U~R/{\sim}~V ~\iff ~ \exists u\in U ~\exists v \in V ~u R v.$$
\end{definition}

\begin{definition}
Пусть $M$ --- модель, $\vf$ --- формула.
Определим эквивалентность $\sim_\vf$, {\em индуцированную формулой} $\vf$ на точках $M$: положим
$u\sim_\vf v$ в случае, когда
каждая подформула $\vf$ одновременно истинна или одновременно ложна в $u$ и $v$.
%для любой
%формулы $\psi$, являющейся подформулой $\vf$, имеет место эквивалентность
%$$M,u\mo \psi \iff M,v \mo\psi.$$

Скажем, что эквивалентность $\sim$ {\em согласована с формулой $\vf$ в модели $M$}, если
$\sim ~\subseteq ~\sim_\vf$.
\end{definition}

Число подформул $\vf$ обозначим $\l(\vf)$. Очевидно, что в любой модели число классов $\sim_\vf$ не превышает $2^{\l(\vf)}$.
%Число подформул $\vf$ обозначим $\lngth{\vf}$. Следующее предложение тривиально.
%\begin{proposition}
%Для любых $\vf$ и $M$ существует согласованная с $\vf$ эквивалентность, число классов которой не превышает $2^{\lngth{\vf}}$.
%\end{proposition}

%{\bf Историческое замечание.} %:... Написать про определимую эквивалентность. Может, вообще в заключение?

%В несколько более общем виде понятие фильтрации было дано работе \cite{...}.

%Фактически, данное выше определение является тем, что называется {\em минимальная фильтрация} в смысле \cite{...}....
%Приведённое выше определение фактически было дано в \cite{...}. Мы используем некоторый его частный случай --- минимальную фильтрация...
%Это определение является частным случаем фильтрации, предложенной в \cite{...}. %....

\begin{proposition}[Лемма о минимальной фильтрации, см., например, \cite{shehtman2004filtration}] \label{prop:epifiltr}
Если $\vf$ истинна в одной из точек модели $M$ над шкалой $F$ и эквивалентность $\sim$ согласована с $\vf$ в $M$, то $\vf$ выполнима в $F/{\sim}$.
\end{proposition}

Приведённое выше определение минимальной фильтрации --- частный случай более общей конструкции ({\em фильтрации модели Крипке}). Фильтрации возникли в конце 1960-х в работах
\cite{LemmonScott1966,Segerberg1968} и в дальнейшем стали одним из основных инструментов доказательства финитной аппроксимируемости модальных логик
\cite{Ch:Za:ML:1997}.
%\cite{Gabbay:1972:JPL}, \cite{shehtman2004filtration},
В случае минимальных фильтраций этот метод сводится к построению подходящих разбиений шкал.
Как обычно, {\em (конечным) разбиением множества} $W$ называется (конечное) семейство попарно непересекающихся непустых множеств, объединение которых совпадает с $W$.
Если $\clA$ и $\clB$ --- разбиения $W$, и $\AA B\in\clB~\EE A\in\clA~B\subseteq A$, то $\clB$ называется {\em измельчением} $\clA$.
Через $\sim_\clA$ будем обозначать отношение эквивалентности, множество классов которого совпадает с $\clA$: $\clA=W/{\sim_\clA}$.
Таким образом, точками шкалы $F/{\sim_\clA}$ являются элементы разбиения $\clA$. Вместо $F/{\sim_\clA}$ и $R/{\sim_\clA}$  будем
писать $F_\clA$ и $R_\clA$.

\begin{definition} Класс шкал $\clF$ {\em допускает минимальные фильтрации}, если для всякой шкалы $F\in\clF$, для всякого конечного разбиения носителя $F$ найдется конечное измельчение этого разбиения $\clB$ такое, что
$F_\clB\in \clF$. Если более того существует $f:\mathbb{N}\to \mathbb{N}$ такая, что
 для всякой шкалы $F\in\clF$, для всякого конечного разбиения $\clA$ носителя $F$ найдется измельчение $\clB$ такое, что
$F_\clB\in \clF$ и
$$|\clB|\leq f(|\clA|),$$ то $\clF$ {\em допускает $f$-ограниченные минимальные фильтрации}.
\end{definition}
\ISH{Раньше было $f$-минимальные --- это очень плохо. Сейчас длинно. Есть альтернативы?}

%Помимо финитной аппроксимируемости, фильтрации могут быть использованы для оценки размера контрмодели.
\begin{proposition}\label{prop:admitsfiltr-fmp}
Если $\clF$  допускает минимальные фильтрации, то $\Log \clF$ финитно аппроксимируема.
Если $\clF$ допускает $f$-ограниченные минимальные фильтрации, то
всякая выполнимая в $\clF$ формула $\vf$ выполнима в шкале из $\clF$, размер которой не превышает
$f(2^{\l(\vf)})$.
\end{proposition}
\begin{proof}
Пусть $\vf$ выполнима в классе $\clF$. Тогда $\vf$ истинна в одной из точек некоторой модели $M$ над некоторой шкалой $F\in\clF$.
Рассмотрим на $M$ эквивалентность $\sim_\vf$.
Пусть $\clA$ --- классы эквивалентности $\sim_\vf$. По условию леммы, у разбиения $\clA$ существует конечное измельчение $\clB$ такое, что $F_\clB\in \clF$.
То, что $\clB$ --- измельчение $\clA$, равносильно тому, что $\sim_\clB~\subseteq ~\sim_\clA$.
По лемме о фильтрации, $\vf$ выполнима в $F_\clB$.

Если при этом $|\clB|\leq f(|\clA|)$, то, поскольку $|\clA|\leq 2^{\l(\vf)}$,
размер $\clB$ (то есть размер построенной шкалы) не превышает $f(2^{\l(\vf)})$.
\end{proof}

\subsection{Правильные разбиения шкал}
\begin{definition}
Пусть $F=(W,R)$ --- шкала. Разбиение $\clA$ множества $W$ назовём {\em правильным}, если для любых $U,V\in \clA$ выполнено следующее условие:
$$\EE u\in U ~\EE v\in V ~uRv ~~\Imp ~~ \AA u\in U ~\EE v\in V ~uRv.$$
Отношение эквивалентности на $W$ {\em правильно разбивает} $F$, если множество его классов эквивалентности является правильным разбиением.
\ISH{Левое условие --- через $R_\clA$? То же --- во введении?}

Шкала называется {\em правильно измельчаемой}, если у любого её конечного разбиения существует правильное конечное измельчение.
Шкала называется {\em $f$-правильно измельчаемой}, где $f:\mathbb{N}\to \mathbb{N}$,
если у любого её конечного разбиения $\clA$ существует правильное конечное измельчение $\clB$ такое, что $|\clB|\leq f(|\clA|)$.
\end{definition}

Известен следующий факт (впервые, по-видимому, он был отмечен в \cite{Franz-Bull}).
\ISH{Я так и не добыл эту заметку.}
\begin{proposition}\label{prop:franz-pmorph}
Если $\clA$ --- правильное разбиение $F$,
то $\Log \{F\} \subseteq \Log \{F_\clA\} $.
\end{proposition}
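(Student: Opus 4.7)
План доказательства: построить сюръективный $p$-морфизм из $F$ в $F_\clA$ и сослаться на классический факт о том, что $p$-морфизмы сохраняют общезначимость модальных формул, откуда сразу вытекает нужное включение логик $\Log \{F\} \subseteq \Log \{F_\clA\}$.

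В качестве такого морфизма я возьму каноническое отображение факторизации $\pi: W \to \clA$, посылающее $w$ в содержащий его блок $\pi(w)\in \clA$. Сюръективность $\pi$ очевидна. Условие монотонности ("вперёд") устанавливается немедленно: из $wRv$ следует $\pi(w)\, R_\clA\, \pi(v)$ прямо по определению $R_\clA$, где в качестве свидетелей можно взять сами $w$ и $v$.

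Основной (и по существу единственный содержательный) пункт --- проверка условия "назад": если $\pi(w)\, R_\clA\, V$, то найдётся $v\in V$ с $wRv$. По определению $R_\clA$ существуют $u\in \pi(w)$ и $v'\in V$ с $uRv'$; далее я применю правильность разбиения $\clA$ к паре блоков $U=\pi(w)$ и $V$, что даёт: для любого элемента $U$ --- в частности для $w$ --- найдётся нужный $v\in V$ с $wRv$. Именно ради обеспечения этого свойства подъёма и введено понятие правильности разбиения, так что отдельной технической трудности здесь не возникает.

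После проверки того, что $\pi$ --- сюръективный $p$-морфизм шкалы $F$ на $F_\clA$, остаётся сослаться на стандартное утверждение (см., например, \cite{ModalMogel}): любая формула, опровергаемая в $F_\clA$, переносится обратно в $F$ подъёмом оценки по $\pi$. Это и даёт $\Log \{F\} \subseteq \Log \{F_\clA\}$.
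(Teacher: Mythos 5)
Your argument is correct, but it takes the dual route to the one in the paper. You verify directly that the quotient map $\pi\colon w\mapsto [w]$ is a surjective p-morphism from $F$ onto $F_\clA$: the forth condition holds for any partition simply by the definition of $R_\clA$, and the back condition is exactly what the defining implication $\EE u\in U\,\EE v\in V\,uRv \Imp \AA u\in U\,\EE v\in V\,uRv$ supplies when applied to $U=\pi(w)$; the inclusion $\Log\{F\}\subseteq\Log\{F_\clA\}$ then follows from the standard p-morphism lemma. The paper instead states the same fact algebraically in one line: if $\clA$ is a proper partition, the modal algebra $A(F_\clA)$ embeds isomorphically into $A(F)$ (via $X\mapsto\bigcup X$), and validity in a frame amounts to the corresponding identity holding in its algebra. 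The two arguments are dual to each other --- a surjective p-morphism of frames corresponds precisely to an embedding of the dual algebras --- so neither is more general; yours is more self-contained for a reader working purely with Kripke semantics, while the paper's is shorter given the duality it presupposes.
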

\begin{proof}
Несложно проверить, что если $\clA$ --- правильное разбиение, то
алгебра $A(F_\clA)$ вкладывается в алгебру $A(F)$.
\end{proof}
Из этого факта следует
\begin{proposition}
Если $\clF$ --- класс правильно измельчаемых шкал, то логика $\Log \clF$ финитно аппроксимируема.
Если $\clF$ --- класс $f$-правильно измельчаемых шкал, то
всякая выполнимая в $\clF$ формула $\vf$ выполнима в $\Log \clF$-шкале, размер которой не превышает
$f(2^{\l(\vf)})$.
\end{proposition}
\begin{proof}
Аналогично доказательству предложения \ref{prop:admitsfiltr-fmp}.
\end{proof}

\begin{proposition}
Пусть $\Lvar=\Log \clF$, и $\clF$ --- все $\Lvar$-шкалы. Если класс $\clF$ состоит из правильно измельчаемых шкал, то он допускает минимальные фильтрации.
\end{proposition}
\begin{proof}
Следует из предложения \ref{prop:franz-pmorph}.
\end{proof}

\subsection{Шкалы конечной высоты}
Пусть $R$ "--- бинарное отношение на $W$.
Транзитивное рефлексивное замыкание $R$ обозначается $R^*$.
Напомним, что
\begin{align*}
 R^0 &= Id(W) = \{(x,x)\mid x\in W\},\\
 R^{i+1} &= R^i \circ R,\\
 R^* &= \bigcup_{i \ge 0} R^i.
\end{align*}

Рассмотрим шкалу $F=(W,R)$.  Определим следующее отношение на $W$:
\[
x \sim_R y \iff x R^* y \mbox{ и } x R^* y.
\]
\ISH{Раньше элементами шкал были $u,v$.}
Это отношение является отношением эквивалентности: оно симметрично
по определению, а транзитивность и рефлексивность наследуются от
отношения $R^*$. Классы эквивалентности по $\sim_R$ называются
\emph{сгустками} шкалы $F$, а шкала $skF  = (W/ {\sim_R}, \leq_R)$, где
$$[x]\leq_R [y] \iff xR^*y,$$ называется \emph{остовом шкалы} $F$
($[z]$ тут
обозначает  $\sim_R$"=класс эквивалентности $z$). Такое определение
отношения $\leq_R$ корректно, так как если $x'\sim_R  x$ и $y'\sim_R y$, то
$$xR^*y \iff x' R^*y'.$$
Легко проверяется, что $\leq_R$ "--- частичный порядок. Кроме того,
если $F$ "--- частичный порядок, то $F$ и $skF$ изоморфны. Положим
также $[x]<_R [y]$ $\iff$ $[x]\leq_R [y]$ и $[x]\neq [y]$ (последнее
в данном случае означает, что $(y,x)\not\in R^*$).

Частичный порядок имеет \emph{высоту} $h$, если в нём существует
цепь мощности $h$ и нет цепей большей мощности. Под \emph{высотой}
$\h(F)$ произвольной шкалы $F$ будем понимать высоту её остова.

Через $\h(F,x)$ обозначим {\em глубину точки $x$ в шкале $F$}, то есть  высоту сужения $F\restr\{y \mid x  R^* y\}$.  Напомним, что
{\em cужением $R$ на $V\subseteq W$} называется отношение $R\restr V=R\cap (V\times V)$, \emph{сужением шкалы}
$F=(W,R)$ на непустое $V\subseteq W$ называется шкала $F\restr V=(V,R\restr
V)$.

Совокупность точек глубины $i$ в $F$ называется {\em $i$-м слоем} $F$.
Таким образом, шкала высоты $h$ разбивается на $h$ слоёв.  Сгустки, соответствующие точкам из одного слоя,  составляют в остове $F$  антицепь.

Заметим, что $\h(F)=1$ равносильно тому, что $R^*$ --- отношение эквивалентности. \ISH{Надо это тут?}

\bigskip
Логики предпорядков конечной высоты хорошо изучены \cite{Seg_Essay,Max1975}. В частности, известна их финитная аппроксимируемость.
Нас будет интересовать более общий случай --- когда имеется лишь свойство предтранзитивности:
%
%интересовать логики {\em предтранзитивных шкал} конечной высоты:
шкала называется {\em предтранзитивной}, если она является {\em $m$-транзитивной} для некоторого $m\geq 0$, то есть обладает свойством $$R^*= R^{\leq m},$$
где $$R^{\leq m}=  \bigcup\limits_{0\leq i \leq m} R^i.$$
Несложно проверить, что $m$-транзитивность равносильна свойству $$R^{m+1}\subseteq R^{\leq m}.$$

Предтранзитивность следует из многих естественных свойств.
Очевидно, что любая конечная шкала является предтранзитивной. Более того, если $F$ --- шкала конечной высоты $h$ и мощности её сгустков ограничены
в совокупности некоторым конечным $N$, то эта шкала является предтранзитивной (более точно --- $hN$-транзитивной).

Заметим ещё, что если $R^n\subseteq R^m$  при $n>m$, то шкала является $(n-1)$-транзитивной. Таким образом, при $n>m$ $\clF(m,n)\subseteq \clG(n-1)$.

\store{
Изучение модальных алгебр и их теорий систематически началось в 1940-х годах в работах Тарского, МакКинси, Йонсона и других \cite{}.
Так, одна из первых описанных модальных систем оказалась теорией алгебр замыкания ...
в \cite{} систем в точности задаёт тождества алгебры замыкания топологических пространств... (пример модальной алгебры), $S5$.
В дальнейшем для модальных алгебр была доказана теорема о представлении --- обобщение теоремы Стоуна о представлении булевых алгебр.

}

\store{
Оформление модальной логики как математической дисциплины началось в 1940-х годах в работах Тарского и МакКинси в связи с исследованием алгебр Куратовского
(сказать тут про эквациональные теории?).

В дальнейшем была обнаружена алгебраическая семантика, позволяющая рассматривать то
или иное модальное исчисление как эквациональную теорию (т.е. совокупность верных тождеств) некоторой алгебры.
Центральным объектом оказались т.н. модальные алгебры --- булевы алгебры с дополнительными аддитивными операциями \cite{}.
\ISH{$f(0)=0$}.
Соответствующие теории называются {\em модальными логиками} (более точно --- {\em нормальными пропозициональными модальными логиками}).
Достаточно быстро выяснилось, что модальные логики имеют различные приложения.... Так, (топологич)...
}

\store{
С некоторой долей неточности можно сказать, что современная модальная логика
изучает эквациональные теории булевых алгебр с дополнительными операторами.
  (Неточность заключается в том, что, во-первых,  имеются также модальные логики предикатов; кроме того, имеются исчисления, которые не являеются эквациональными теориями,
  но в настоящей работе они не рассматриваются)
}

\section{Правильные разбиения шкал конечной высоты}\label{sect:part}
Пусть $\exp^i_2(x)$ обозначает башню экспонент высоты $i$:
\[
 \Bigl.
\begin{array}{c}
\exp^i_2(x) =
{
{2^{\displaystyle{2^{\scriptscriptstyle{\iddots^{\displaystyle{2^x}}}}}}}
}
\end{array}
\biggr\} i\ \hbox{раз}
\]
%Наша цель --- доказать следующую теорему.
\begin{theorem}\label{thm:finite_depth_part}
Всякая шкала конечной высоты, в которой мощности сгустков ограничены
в совокупности некоторым конечным $N$,
является правильно измельчаемой.
Более того, если $h$ --- высота такой шкалы, то
она $f$-правильно измельчаема, где
$$f(x)=\exp^h_2\bigl( (N+h+1)(\log_2 x+N)\bigr).$$
\end{theorem}
\begin{proof}
Пусть $F=(W,R)$ --- шкала, удовлетворяющая условиям теоремы, $\clA$ --- некоторое конечное разбиение $W$.
Определим по индукции последовательность эквивалентностей $$\sim_0~\subseteq ~\dots ~\subseteq ~\sim_h,$$ правильно разбивающих $W$, и соответствующих правильных разбиений
$\clB_i:=W/{\sim_i}$. Каждое из этих разбиений будет измельчением $\clA$, а последнее разбиение $\clB_h$ окажется конечным.

%Множество точек в $F$ глубины $i$ назовём {\em $i$-м слоем}.
Будем писать $\h(x)$ вместо $\h(F,x)$. Положим $$X_i=\{x \mid \h(x)>i\},~ Y_i=\{x \mid \h(x)=i\},
~Z_i=\{x \mid \h(x)<i\},$$
то есть $Y_i$ --- $i$-й слой шкалы $F$, $Z_i$ --- точки глубины меньшей $i$, $X_i$ --- точки глубины большей $i$.

Положим $$\sim_0~:=~Id(W),$$ т.е. $\clB_0$ --- разбиение $W$ на синглетоны. Это разбиение очевидно является правильным измельчением $\clA$.

Пусть $1\leq i\leq h$. Определим $\sim_i$. Эта эквивалентность расширит $\sim_{i-1}$ некоторыми парами точек из
 $i$-го слоя, и при этом разобьёт его на конечное число классов. Для этого нам потребуются некоторые вспомогательные конструкции. Во-первых, выделим ``верхнюю'' часть разбиения $\clB_{i-1}$:
$$\clB'_{i-1}=\{B\in\clB_{i-1}\mid B\subseteq Z_{i}\}.$$
Теперь зафиксируем сигнатуру $\Omega_i$, состоящую из одного бинарного предикатного символа,
одноместных предикатных символов $\un{P}_B$ для всех $B \in \clB'_{i-1}$, одноместных предикатных символов $\un{T}_A$ для всех $A \in \clA$ и символа константы.
Для каждого элемента $u\in Y_i$ определим $\str{u}$ --- структуру сигнаруты $\Omega_i$. Её носителем будет сгусток $C$ шкалы $F$, содержащий точку $u$.
Бинарным отношением на $C$ будет сужение $R\restr C$.
Для каждого $B\in \clB'_{i-1}$ определим подмножество $C$, интерпретирующее $\un{P}_B$:
$$P^u_B:=\{w\in C \mid \EE v\in B ~ wRv \}$$
(то есть $P^u_B$ --- пересечение $C$ с прообразом $B$ относительно $R$).
Для каждого $A\in \clA$ в качестве множества, интерпретирующего $\un{T}_A$ возьмём
$T^u_A:= C \cap A$.
В качестве константы выберем $u$.
Таким образом,
$$\str{u}:=(C, R\restr C, (P^u_B)_{B\in \clB'_{i-1}}, (T^u_A)_{A\in \clA}, u).$$
\ISH{А правильные ли скобки?}
На $i$-м слое определим отношение эквивалентности $\approx_i$: для $u,v\in Y_i$ положим
\begin{center}
$u \approx_i v$ $\iff$ структуры $\str{u}$ и $\str{v}$ изоморфны.
\end{center}
Наконец, положим
$$\sim_i~:=~\sim_{i-1} \cup \approx_i.$$

По индукции легко увидеть, что:
\begin{itemize}
\item если $u\sim_i v$, то $u$ и $v$ лежат в одном слое;
\item если $\h(u)>i$, то $u\sim_i v ~\iff ~u=v$;
\item если $\h(u)<i$, то $u\sim_i v ~\iff ~u\sim_{i-1} v$.
\item если $\h(u)=i$, то $u\sim_i v ~\iff ~u\approx_i v$.
\end{itemize}

\begin{lemma}
$\clB_i$ является правильным разбиением для всех $i\leq h$.
\end{lemma}
\begin{proof}
Индукцией по $i$. Разбиение $\clB_0$ является правильным. Пусть $1\leq i\leq h$.

Пусть $U,V\in \clB_i$ и $u_0 R v_0$ для некоторых $u_0\in U$, $v_0\in V$.
Рассмотрим произвольный $u\in U$ и покажем, что найдется $v\in V$ такой, что $uRv$.

Если $u_0\in X_i$, то $U=\{u_0\}$, то есть $u=u_0$ и доказывать нечего.

Если $u_0\in Z_i$, то $U$ и $V$ лежат в ``верхней'' части шкалы $Z_i$, на которой
$\sim_i$ и $\sim_{i-1}$ совпадают. Поэтому $U,V\in \clB_{i-1}$. В силу предположения индукции, $\clB_{i-1}$ является правильным,
поэтому $uRv$ для некоторого $v\in V$.

Наконец, пусть $u_0\in Y_i$. В этому случае $u\approx_i u_0$, поэтому существует изоморфизм $g: \str{u_0}\to \str{u}$.
Пусть $C$ --- сгусток $u_0$, $D$ --- сгусток $u$.
Поскольку $u_0$ --- точка $i$-го слоя и $u_0 R v_0$, то возможны два случая: $v_0\in Y_i$ и $v_0\in Z_i$.

Первый случай: $v_0\in Y_i$. В этом случае $u_0$ и $v_0$ --- точки одного слоя. Поскольку при этом $u_0 R v_0$, то $u_0$ и $v_0$ лежат в одном сгустке: $v_0\in C$.
Положим $v:=g(v_0)$ и покажем, что $uRv$ и $v\in V$. Имеем: $g(u_0)=u$, $g(v_0)=v$ и $u_0(R\restr C)v_0$, следовательно $u (R\restr D) v$, и поэтому $uRv$.
Поскольку $g$ --- изоморфизм структур $\str{u_0}$ и $\str{u}$, и $g(v_0)=v$, то
$g$ --- изоморфизм структур $\str{v_0}$ и $\str{v}$: действительно, структуры, соответствующие точкам одного сгустка, различаются лишь константой.
Поэтому $v_0\sim_i v$, то есть $v\in V$.

Второй случай: $v_0\in Z_i$.
В этом случае $V\in \clB'_{i-1}$ и сигнатура $\Omega_i$ содержит символ $\un{P}_V$.
Поскольку $u_0 R v_0$, то $u_0\in P^{u_0}_V$. Поскольку $\str{u_0}$ и $\str{u}$ изоморфны, получаем $u\in P^{u}_V$, то есть $uRv$ для некоторого $v\in V$.
\end{proof}

\begin{lemma}
$\clB_i$ является является измельчением $\clA$ для всех $i\leq h$.
\end{lemma}
\begin{proof}
Индукцией по $i$. Разбиение $\clB_0$ является измельчением $\clA$. Пусть $1\leq i\leq h$.

Нужно проверить включение $\sim_i~\subseteq~{\sim_\clA}$.
В силу предположения индукции, $\sim_{i-1}~ \subseteq~{\sim_\clA}$, поэтому достаточно проверить включение $\approx_i ~\subseteq~{\sim_\clA}$.
Пусть $u\approx_i v$. Для некоторого $A\in \clA$ имеем $u\in A$. Тогда $u\in T^u_A$; поскольку $\str{u}$ и $\str{v}$ изоморфны,
то $v \in T^v_A$, следовательно $v\in A$.
\end{proof}

По индукции легко проверить, что все множества $\clB'_0,\dts,\clB'_h$ конечны.
Действительно, $\clB'_0=\emp$, поскольку $Z_1=\emp$. Если $i>0$, то $$\clB'_i=\clB'_{i-1}\cup (Y_i/{\approx_i}).$$
Если конечно $\clB'_{i-1}$, то конечна сигнатура $\Omega_i$, поэтому конечно число неизоморфных структур мощности не более $N$ в этой сигнатуре; следовательно, конечно разбиение $Y_i/{\approx_i}$.

Поскольку $Z_{h+1}=W$, то $\clB'_h=\clB_h$, поэтому $\clB_h$ конечно. Таким образом, $\clB_h$ является правильным конечным измельчением $\clA$ и первое утверждение теоремы доказано.

\medskip

Оценим мощность $\clB_h$.
\begin{lemma} Для $i>0$
\begin{equation}\label{eq:size-of-Y}
|Y_i/{\approx_i}|\leq N^2 \cdot |\clA|^N \cdot 2^{N\cdot |\clB'_{i-1}|+N^2}.
\end{equation}
\end{lemma}
\begin{proof}
Чтобы задать $\Omega_i$"=структуру (с точностью до изоморфизма), нужно: определить мощность её носителя $M$, $1\leq M\leq N$; бинарное отношение (их $2^{M^2}$); $|\clB'_{i-1}|$ подмножеств, интерпретирующих символы $\un{P}_B$, $B\in \clB'_{i-1}$
(это даст нам $2^{M\cdot |\clB'_{i-1}|}$ вариантов); выбрать подмножества, интерпретирующие символы $\un{T}_A$, $A\in \clA$
(несложно увидеть, что это даст нам $|\clA|^M$ вариантов); выбрать константу ($M$ вариантов). Таким образом:
\begin{equation*}
|Y_i/{\approx_i}|\leq N \cdot 2^{N^2}\cdot 2^{N\cdot |\clB'_{i-1}|} \cdot |\clA|^N \cdot N.\qedhere
\end{equation*}
\end{proof}

Оценим теперь мощность $\clB_h$. Для этого оценим мощности $\abs{\clB'_i}$ при $1\leq i\leq h$.
$$
\abs{\clB'_1} = \abs{Y_1/{\approx_1}} \le N^2 \cdot \abs{\clA}^N
\cdot 2^{N\abs{\clB'_0} + N^2} = N^2 \cdot \abs{\clA}^N \cdot 2^{N^2}
\le 2^{(N+2)(\log_2 \abs{\clA}+N)}.
$$
\ISH{Это неверно!?}

По индукции покажем, что для всех $1\leq i\leq h$
$$\abs{\clB'_i} \leq \exp^i_2\bigl( (N+i+1)(\log_2\abs{\clA}+N)\bigr).$$
Используя предположение индукции и то, что $N, \abs{\clA}\geq 1$, получаем:
\begin{align*}
 \abs{\clB'_{i+1}} &\le \abs{\clB'_i} + \abs{Y_{i+1}/{\approx_{i+1}}} \le \\
 \le& \exp^i_2\bigl( (N+i+1)(\log_2\abs{\clA}+N)\bigr) + N^2 \cdot
\abs{\clA}^N \cdot 2^{N\abs{\clB'_i} + N^2} \le\\
 \le& \exp^i_2\bigl( (N+i+1)(\log_2\abs{\clA}+N)\bigr)+ \\
 &\qquad + N^2 \cdot \abs{\clA}^N \cdot 2^{N\cdot \exp^i_2\bigl(
(N+i+1)(\log_2\abs{\clA}+N)\bigr) + N^2} \le\\
 \le&(N^2 \cdot
\abs{\clA}^N +1) \cdot 2^{N\cdot \exp^i_2\bigl(
(N+i+1)(\log_2\abs{\clA}+N)\bigr) + N^2} \le\\
% \le& 2^{N^2} \cdot 2^{N\log_2\abs{\clA}} \cdot 2^{N\cdot \exp^i_2\bigl(
%(N+i+1)(\log_2\abs{\clA}+N)\bigr) + N^2} \le\\
 \le& 2^{2N} \cdot \abs{\clA}^N  \cdot 2^{N\cdot \exp^i_2\bigl(
(N+i+1)(\log_2\abs{\clA}+N)\bigr) + N^2} =\\
 =& 2^{N(\log_2 \abs{\clA} +N+2) + N\cdot \exp^i_2\bigl(
(N+i+1)(\log_2\abs{\clA}+N)\bigr)} \le\\
\le& 2^{\exp^i_2\bigl((N+i+1)(\log_2\abs{\clA}+N)\bigr) + N\cdot
\exp^i_2\bigl((N+i+1)(\log_2\abs{\clA}+N)\bigr)} =\\
=& 2^{(N+1)\cdot \exp^i_2\bigl((N+i+1)(\log_2\abs{\clA}+N)\bigr)} \le\\
 \le& 2^{\exp^i_2\bigl(
(N+i+2)(\log_2\abs{\clA}+N)\bigr)}=\exp^{i+1}_2\bigl(
(N+i+2)(\log_2\abs{\clA}+N)\bigr).
\end{align*}

Таким образом, если $x$ --- мощность начального разбиения $\clA$, то мощность построенного правильного разбиения $\clB_h=\clB_h'$ ограничена
$\exp^h_2\bigl( (N+h+1)(\log_2 x+N)\bigr).$
\end{proof}

\section{Фильтрации в классах $\clF(m,n,h)$ и $\clG(m,h)$}\label{sect:twofam}
Докажем, что для всех $n>m \geq 1,~h\geq 1$ логики классов $\clF(m,n,h)$ и $\clG(m,h)$, а также классов $\clF_*(m,n)$ и $\clG_*(m)$, являются финитно аппроксимируемыми.
Для случая высоты 1 этот результат был известен: фильтрации для $\clG(m,1)$  были построены в \cite{jansana1994};
для шкал $\clF(m,n,1)$ доказательство было получено в \cite{Ku:Shap:TACL:2011}.
%конструкция сложнее, особенно в случае $n>m+1$, этот результат был получен недавно в \cite{Ku:Shap:TACL:2011}.
С помощью теоремы \ref{thm:finite_depth_part} распространим эти результаты на случай произвольной конечной высоты.
Для этого нужно избавиться от бесконечных сгустков.

%\subsection{Разбиения сгустков}

%Покажем, как в предтранзитивных шкалах избавиться от бесконечных сгустков.

\begin{lemma}\label{lemma:finite-clusters-part}
Пусть $\clA$ "--- конечное разбиение $(m,n)$"=шкалы $F$, $n>m\geq 1$.
Тогда существует его измельчение $\clB$ такое, что:
\begin{enumerate}
\item остовы шкал  $F$ и $F_\clB$ изоморфны;
\item все сгустки в $F_\clB$ конечны и имеют мощность не более
$(n-m)|\clA|$;
\item $F_\clB$ является $(m,n)$"=шкалой.
\end{enumerate}
\end{lemma}

%Доказательство этой теоремы основывается на построении подходящего разбиения $(m,n)$-шкалы.
Прежде чем мы приступим к построению разбиений $(m,n)$-шкал, приведём несколько вспомогательных наблюдений.

Рассмотрим произвольную шкалу $F=(W,R)$.
Конечная непустая последовательность $w_0  \ldots w_n$ элементов $W$ называется {\em путём в $F$}, если
$w_i R w_{i+1}$ для всех $i<n$.
\emph{Длина пути:} $\l(w_0 \ldots w_n) = n$. Если $w_0=w_n$, то путь называется {\em $w_0$-петлёй} (или просто {\em петлёй}).
В частности, всякая последовательность длины $1$ является петлёй длины 0.
Если конец пути $\alpha$ совпадает с началом пути $\beta$, то {\em соединение путей} $\alpha\circ\beta$ "--- это конкатенация
последовательностей $\alpha'$ и $\beta$, где последовательность $\alpha'$ получена из $\alpha$ удалением последнего элемента; в этом случае $\alpha\circ \beta$ является путём,
длина которого "--- сумма длин путей $\alpha$ и $\beta$.
%Определим операцию $\circ$ на путях, у которых конец первого пути совпадает с началом второго, следующим образом:
%\[
%\hbox{для}\ \alpha = w_0 w_1\ldots w_n, \ \beta = w_n \ldots w_m,\ \alpha \circ \beta = w_0 w_1 \ldots w_n  \ldots w_m.
%\]
%\end{definition}

Для $w \in W$ пусть $(G(w),\circ)$ "--- моноид всех $w$-петель;
его единицей является $w$ "--- петля длины $0$. Для $w$-петли $\alpha$ полагаем, как обычно, $\alpha^0:=w$,
$\alpha^{i+1}:=\alpha\circ\alpha^i$.
%Отображение $l$ является гомоморфизмом моноида $(G_r, \circ)$ на $(G_r^{\sharp},+)$.

Для $k>0$ пусть $g_k(\alpha)$ "--- остаток от деления длины пути $\alpha$ на $k$. Положим
$$G_k(w):=\{g_k(\alpha)\mid \alpha \in G(w)\}.$$

\begin{proposition}\label{prop:subgr}
$G_k(w)$ "--- подгруппа $\ZZ_k$ для любых $k>0$, $w\in W$.
\end{proposition}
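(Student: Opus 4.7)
����� ���� �������� ��� ������: $G_k(w)$ �������� �������������� �������� ������ $\ZZ_k$, ��������� �������� ����� ��� ��������. �����������, ��� � ������������ ��������� ���������� ���������� ������ �� ��������� ��������������, ��� ��� � ������������ ������ ����� ����� ������������ ���� ��������� ������ ����� ��������� �����.

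������, ��� $w$ ��� �� ���� �������� ����� ����� $0$ (������ ������������������ �� ������ ������� $w$), �� ��� ������ ����� $0\in G_k(w)$.

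����� �������� ���������� ������������ ���� �������� ����� $\circ$. ���� $a,b\in G_k(w)$, �� ����� �������  $\alpha,\beta \in G(w)$ � $\l(\alpha)\equiv a\pmod k$ � $\l(\beta)\equiv b\pmod k$. ��������� $\alpha\circ\beta$ ����� �������� $w$-����� ����� $\l(\alpha)+\l(\beta)$, ������� $a+b\bmod k \in G_k(w)$.

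�������, ��� �������� ������ ���������� ���������. ��� $a\in G_k(w)$, ��� $\alpha\in G(w)$ � $\l(\alpha)\equiv a\pmod k$, ����������� ����� $\alpha^{k-1}$ ����� $(k-1)\l(\alpha)\equiv -a\pmod k$, �� ���� $-a\bmod k\in G_k(w)$. (��������, ���� �� ����� ����������� ��� ������ ���������, ��� �������� ��������� $\ZZ_k$ ����������� ����� ��������� ���������.) ������ �������� ����� ���������: �� ������� ���������� ������������ ������ �� ������������ $R$ � ���������� ������ ����� ����������, ��� ��������������� ���� �� ������ ���� ��� ����� �������� �����.
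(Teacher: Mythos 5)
Your proof is correct and takes essentially the same route as the paper's: closure under addition via $g_k(\alpha)+g_k(\beta)=g_k(\alpha\circ\beta)$ and inverses via the iterated cycle $\alpha^{k-1}$. Your extra observation that $0\in G_k(w)$ (the trivial cycle $w$ of length $0$) merely makes explicit the nonemptiness that the paper leaves implicit in its definition of the monoid $(G(w),\circ)$.
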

\begin{proof}
Если $\alpha, \beta$ "--- $w$-петли, то
$g_k(\alpha)+g_k(\beta)=g_k(\alpha\circ \beta)\in G_k(w)$, $g_k(\alpha^{k-1})$ "--- обратный к $g_k(\alpha)$.
\end{proof}

\begin{proposition}\label{prop:Runiv1}
Если $R^*=W\times W$, то  $$G_k(w)=G_k(v)$$
для любых $k>0$, $w,v\in W$.
\end{proposition}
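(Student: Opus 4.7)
The plan is to prove the inclusion $G_k(w)\subseteq G_k(v)$ (the reverse follows by symmetry) by conjugating an arbitrary $w$-cycle with a path-and-back-path between $v$ and $w$, and then using the subgroup structure established in Proposition~\ref{prop:subgr} to cancel the contribution of that detour.

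In more detail, I would first invoke the hypothesis $R^*=W\times W$ to pick a path $\gamma$ from $v$ to $w$ and a path $\delta$ from $w$ to $v$; let $p=\l(\gamma)$ and $q=\l(\delta)$. Then $\gamma\circ\delta$ is a $v$-cycle, so $p+q\bmod k\in G_k(v)$. Next, given an arbitrary $r\in G_k(w)$, choose a $w$-cycle $\alpha$ with $\l(\alpha)\equiv r\pmod k$ and consider the concatenated $v$-cycle $\gamma\circ\alpha\circ\delta$; its length is $p+\l(\alpha)+q$, so $(p+\l(\alpha)+q)\bmod k\in G_k(v)$.

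At this point Proposition~\ref{prop:subgr} does the work: $G_k(v)$ is a subgroup of $\mathbb{Z}_k$, so it is closed under subtraction, and
\[
r\equiv\l(\alpha)\equiv (p+\l(\alpha)+q)-(p+q)\pmod k
\]
lies in $G_k(v)$. This gives $G_k(w)\subseteq G_k(v)$, and swapping the roles of $w$ and $v$ finishes the proof.

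There is no real obstacle here: the only nontrivial ingredient is the subgroup property, which is already available, and the concatenation of paths behaves additively on lengths by construction. The one small thing to double-check is that $\gamma\circ\alpha\circ\delta$ is a legitimate concatenation (the endpoints match: $v\to w$, then $w$-cycle, then $w\to v$), which is immediate from the definitions of $\gamma$, $\alpha$, $\delta$.
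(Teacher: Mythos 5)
Your proof is correct, and it rests on the same basic move as the paper's: conjugate a cycle at one point by a path to the other point and back, so that the endpoints match and only the length bookkeeping remains. The difference is in how the contribution $p+q$ of the round trip is cancelled. You do it algebraically: since $\gamma\circ\delta$ is a $v$-cycle, $(p+q)\bmod k\in G_k(v)$, and Proposition~\ref{prop:subgr} makes $G_k(v)$ a subgroup of $\mathbb{Z}_k$, closed under subtraction, so $r\equiv(p+\l(\alpha)+q)-(p+q)\pmod k$ lies in $G_k(v)$. The paper instead cancels the detour combinatorially, without invoking Proposition~\ref{prop:subgr} at all: it forms the single cycle $\beta\circ(\alpha\circ\beta)^{k-1}\circ\gamma\circ\alpha$, which contains exactly $k$ copies of the round trip, so its length is $\l(\gamma)+k\cdot\l(\alpha\circ\beta)\equiv\l(\gamma)\pmod k$ and the desired residue is witnessed by one explicit cycle. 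Your version is a touch more conceptual but depends on the subgroup lemma; the paper's is self-contained and exhibits a concrete witness, which is in the same spirit as the later use of these cycles in Proposition~\ref{prop:Runiv3}. Both arguments are complete; your endpoint checks ($v\to w$, $w$-cycle, $w\to v$) are exactly the ones that need verifying, and one direction plus symmetry does suffice.
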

\begin{proof}
Пусть $z\in G_k(v)$, то есть $z=g_k(\gamma)$ для некоторой $v$-петли $\gamma$. Покажем, что $z\in G_k(w)$.
Пусть $\alpha$ "--- путь из $v$ в $w$, $\beta$ "--- из $w$ в $v$ (такие пути найдутся, поскольку $R^*=W\times W$).
Рассмотрим $w$-петлю $$\delta=\beta\circ (\alpha\circ \beta)^{k-1}\circ \gamma\circ \alpha.$$ Имеем: $\l(\delta)=\l(\gamma)+k\cdot \l(\alpha\circ\beta)$, то есть $g_k(\delta)=z$ и $z\in G_k(w)$.
\end{proof}

Для $d>0$ определим на $W$ отношение $\preceq_d$:
$$
w \preceq_d v \iff \mbox{существует путь из $w$ в $v$, длина которого делится на } d.
$$
Очевидно, что в любой шкале $\preceq_d$ является предпорядком.

\begin{proposition} \label{prop:Runiv3}
Пусть $R^*=W\times W$.
\begin{enumerate}
\item \label{prop:Runiv2-1}
%Если $R^*=W\times W$ и $d$ делит длину любой петли, то $\sim_d$ "--- отношение эквивалентности и
%$$|W/{\sim_d}|\leq d.$$
Если $d$ делит длину любой петли, то $\preceq_d$ "--- отношение эквивалентности и
$$|W/{\preceq_d}|\leq d.$$
\item  \label{prop:Runiv3-2}
%Пусть $R^*=W\times W$.
Для любого $k>0$ существует $d$ такое, что
%$\preceq_d$ "--- отношение эквивалентности
$d$ делит $k$, $d$ делит длину любой петли
и $$\preceq_d ~\subseteq~\preceq_k.$$
%если $w\sim_d v$, то $k$ делит длину некоторого пути из $w$ в $v$.
\end{enumerate}
\end{proposition}
\begin{proof}
(1).
Проверим симметричность.
Пусть $\alpha$ "--- путь из $w$ в $v$ такой, что $d$ делит $\l(\alpha)$. Поскольку $R^*=W\times W$, найдется путь $\beta$ из $v$ в $w$.
Путь $\alpha \circ \beta$ является петлей, поэтому $d$ делит $\l(\alpha \circ \beta)$. Тогда $d$ делит и $\l(\beta)$.

Покажем, что $|W/{\preceq_d}|\leq d$. Пусть $W$ содержит хотя бы две различные точки.
Тогда из любой точки существует путь в отличную от неё, поэтому $R$ удовлетворяет условию
$$\AA w \EE v~wRv.$$
%Это верно, поскольку $R^*=W\times W$: из любой точки существует путь в отличную от неё.
Поэтому в шкале существует путь длины $d-1$ $w_1\dts w_d$. Любая точка $w\in W$ эквивалентна одной из точек $w_i$:
действительно, существует путь $\alpha$ из $w_d$ в $w$; тогда $w\preceq_d w_i$ при $i=d-g_d(\alpha)$.

(2).
%Пусть $G=G(v,k)$.
%G(v,k) является конечной циклической группой.
Рассмотрим какую-нибудь точку $u\in W$ и группу
$G_k(u)$. Если эта группа тривиальна, положим $d:=k$;
в противном случае пусть $d$ "--- наименьший ненулевой элемент $G_k(u)$.
В силу предложения \ref{prop:subgr}, $d$ делит длину любой $u$-петли.
%d делит любой элемент из $G(v,k)$, поскольку порождающий
Поэтому в силу предложения \ref{prop:Runiv1} $d$ делит длину любой петли в шкале.

Осталось построить путь кратный $k$ из $w$ в $v$ для случая $w\preceq_d v$. Если $d \neq k$, то существует $v$-петля $\beta$ такая, что $g_k(\beta)=d$.
Поэтому если $\alpha$ "--- путь из $w$ в $v$ кратный $d$, то, поскольку $d$ делит $k$,
длина $\alpha\circ \beta^i$ делится на $k$ при некотором $i$.
\end{proof}

%Финитная аппроксимируемость $\LK{}_n^m.B^*_1$ следует немедленно из
%этой леммы: для формулы $\vf$, истинной в некоторой точке модели $M$
%над $F$, нужно положить в качестве $\equiv$ отношение $\sim^M_\Psi$,
%где $\Psi$ "--- множество всех подформул $\vf$.

\begin{proposition}\label{prop:filtr-clust}
Рассмотрим шкалу $F=(W,R)$ и эквивалентность на ней $\sim~\subseteq~\sim_R$.
Пусть $\ov{F}=(\ov{W},\ov{R})$ "--- $\sim$-фильтрация шкалы $F$. Тогда:
\begin{enumerate}
\item
$\ov{W}/{\sim_{\ov{R}}}=\{\ov{C} \mid C\in W/{\sim_R}\}$, где $\ov{C}$ "--- $\sim$-классы, лежащие в сгустке $C$;
%
%на классы эквивалентности по отношению $\sim \restr C$;
\item
остовы шкал $F$ и $\ov{F}$ изоморфны.
\end{enumerate}
\end{proposition}
\begin{proof}
Пусть $\ov{w}$ обозначает $\sim$-класс точки $w$.

Заметим, что
\begin{equation}\label{eq:R-ovR}
wR^*v ~ \iff ~ \ov{w}{\left(\ov{R}\right)}^*\ov{v}.
\end{equation}
Действительно, если $wR^i v$, то $\ov{w}{\left(\ov{R}\right)}^i\ov{v}$ по определению $\ov{R}$;
обратная импликация имеет место в силу того, что $\sim~\subseteq~\sim_R$.
Из этого следует, что
\begin{equation}\label{eq:simR-simovR}
w\sim_Rv \iff \ov{w}\sim_{\ov{R}}\ov{v}.
\end{equation}
Поэтому если $X$ "--- сгусток в $\ov{F}$, то $C=\cup X$ "--- сгусток в $F$  и
$X=\ov{C}$. Если же $C$ "--- сгусток в $F$, то, опять же в силу (\ref{eq:simR-simovR}), $\ov{C}$ "--- сгусток в $\ov{F}$.

Отображение $C\mapsto \ov{C}$ является изоморфизмом между остовами шкал $F$ и $\ov{F}$ в силу (\ref{eq:R-ovR}).
\end{proof}

\begin{proof}[Доказательство леммы \ref{lemma:finite-clusters-part}]
%Пусть $\vf$ выполнима в $(m,n)$"=шкале $F=(W,R)$.
Пусть $\set{C_i}_{i \in I}$ "--- все сгустки шкалы $F=(W,R)$, то есть классы эквивалентности по отношению $\sim_R$.
Положим $R_i=R\restr C_i$. Заметим, что $R_i^*=C_i\times C_i$. Положим $k=n-m$.
В силу предложения \ref{prop:Runiv3}, на $C_i$ существует отношение эквивалентности $\sim_i$ такое, что
\begin{center}
если $w\sim_i v$,  то $k$ делит длину некоторого пути из $w$  в $v$, и
\end{center}
\begin{equation}\label{eq:prof-fin-cl:2}
|C_i/{\sim_i}|\leq k.
\end{equation}
На каждом сгустке $C_i$ определим эквивалентность
$$\approx_i~:=~\sim_i\cap  \sim_{\clA}.$$
%Поскольку число $\sim_{\Phi}^M$-классов не превышает $2^{\lngth{\vf}}$, то
В силу (\ref{eq:prof-fin-cl:2}), имеем
\begin{equation}\label{eq:prof-fin-cl:3}
|C_i/{\approx_i}|\leq k|\clA|.
\end{equation}
Определим на $W$ отношение эквивалентности $\approx$ и соответствующее ему разбиение $\clB$:
$$\approx~:=~\bigcup_{i\in I}\approx_i,~~\clB:=W/{\approx}.$$
%Пусть $\ov{F}=(\ov{W},\ov{R})$ "--- $\approx$-фильтрация шкалы $F$.
Поскольку $\approx~\subseteq~\sim_{\clA}$, то $\clB$ "--- измельчение $\clA$.
Кроме того, $\approx~\subseteq~\sim_R$. Поэтому
в силу предложения \ref{prop:filtr-clust}
остовы шкал $F$ и $F_\clB$ изоморфны, а мощности сгустков последней ограничены  $k|\clA|$.

\medskip
Осталось показать, что $F_\clB$ является $(m,n)$-шкалой. %Пусть $F/{\approx}=(\ov{W},\ov{R})$.

Заметим, что если  $[u]_\approx ~R_\clB~[v]_\approx$, то $u R^{zk+1}v$ для некоторого $z\geq 0$:
действительно, в силу определения $R_\clB$, найдутся $u'\approx u$, $v' \approx v$ такие, что $u' R v'$;
поскольку $u'\sim_i u$ и $u'\sim_j u$ для некоторых $i,j\in I$, то найдутся пути $\alpha$ из $u$ в $u'$ и
$\beta$ из $v$ в $v'$, длины которых делятся на $k$.

Пусть $[u]_\approx \left(R_\clB\right)^n [v]_\approx.$ Тогда
$u R^{zk+n} v$ для некоторого $z\geq 0$. Поскольку $F$ является $(m,n)$-шкалой,
$R^{zk+n}\subseteq \dots\subseteq R^n \subseteq R^m$.
%$R^{zk+n}=R^{(z+1)(n-m)+m}\subseteq R^m$.
Поэтому $u R^{m} v$. Следовательно,
$[u]_\approx \left(R_\clB\right)^m [v]_\approx$.
\end{proof}

%Из доказанного немедленно следует финитная аппроксимируемость логик $(m,n)$-шкал высоты $1$  ($n>m\geq 1$). Далее мы распространим этот факт
%на логики шкал произвольной конечной высоты.
Сформулируем аналог доказанной леммы для $m$-транзитивных шкал.

\begin{lemma}\label{lemma:finite-clustersMtrans-sat}
Пусть $\clA$ "--- конечное разбиение $m$-транзитивной $F$, $m\geq 1$ \ISH{$m\geq 1$? А почему не 0? Отследить по всей работе}.
Тогда существует его измельчение $\clB$ такое, что:
\begin{enumerate}
\item остовы шкал  $F$ и $F_\clB$ изоморфны;
\item все сгустки в $F_\clB$ конечны и имеют мощность не более
$|\clA|$;
\item $F_\clB$ $m$-транзитивна.
\end{enumerate}
\end{lemma}
\begin{proof}
Доказательство этого факта проще, чем доказательство предыдущей леммы: при фильтрации нам не нужно строить
путь данной длины "--- достаточно показать, что любой путь длины более $m$ можно сократить до пути длины не превосходящей $m$.

$\set{C_i}_{i \in I}$ и $R_i$
определим так же, как и в доказательстве леммы \ref{lemma:finite-clusters-part}.
Разбиение $\clB$ определяется проще:
$$\approx~:=~\sim_{\clA}\cap \sim_R,~\clB:=W/{\approx}.$$
В силу тех же аргументов, что и в доказательстве леммы \ref{lemma:finite-clusters-part},
$\clB$ является измельчением $\clA$,
остовы шкал $F$ и $F_\clB$ изоморфны и мощности сгустков последней ограничены  $|\clA|$.

Осталось показать, что $F_\clB$ $m$-транзитивна. Так как $\approx ~\subseteq ~\sim_R$, то из
$[u]_\approx ~R_\clB~ [v]_\approx$ следует $u R^{z+1}v$ для некоторого $z\geq 0$. Поэтому если
$[u]_\approx \left(R_\clB\right)^{m+1} [v]_\approx,$ то
$u R^{m+1+z} v$ для некоторого $z\geq 0$. Поскольку $F$ $m$-транзитивна, то $u R^{z} v$ для некоторого $z\leq m$; следовательно,
$[u]_\approx \left(R_\clB\right)^z [v]_\approx$.
\end{proof}

\begin{theorem}\label{thm:fmp-two-classes} Пусть $n>m \geq 1,~h\geq 1$.
Логики классов $$\clF(m,n,h),~ \clF_*(m,n),~\clG(m,h),~\clG_*(m)$$   финитно аппроксимируемы.
\end{theorem}
\begin{proof}
Пусть $\vf$ выполнима в некоторой шкале $F\in\clF_*(m,n)$, $h$ "--- высота $F$. Тогда $\vf$ истинна в одной из точек некоторой модели $M$ над $F$.
Рассмотрим на $M$ эквивалентность $\sim_\vf$. Пусть $\clA$ "--- классы эквивалентности $\sim_\vf$. Применив лемму
\ref{lemma:finite-clusters-part}, мы получим шкалу $G\in \clF_*(m,n)$ конечной высоты $h$, в которой также выполнима формула $\vf$. Причём в $G$ все сгустки конечны и ограничены в совокупности, поэтому, применив теорему \ref{thm:finite_depth_part} к разбиению $\clB$, соответствующему эквивалентности $\sim_\vf$ в подходящей модели над $G$, получим правильное разбиение $\clC$ шкалы $G$ такое, что $\vf$ выполнима в $G_\clC$.
Финитная аппроксимируемость логик классов $\clF(m,n,h)$ и $\clF_*(m,n)$ теперь следует из предложения \ref{prop:franz-pmorph}.

Доказательство теоремы для классов $\clG(m,h)$ и $\clG_*(m)$ совершенно аналогично, нужно лишь вместо леммы \ref{lemma:finite-clusters-part}
применить лемму \ref{lemma:finite-clustersMtrans-sat}.
\end{proof}

\section{Модальная аксиоматизация}\label{sect:ax}
\subsection{Определимость}%Выразимость

Положим $\Box\vf=\neg\Di\neg \vf$,
\begin{align*}
\Di^0 \vf &= \vf, &\Di^{i+1}\vf&=\Di \Di^i \vf, &\Di^{\leq m} \vf &= \bigvee_{i=0}^{m} \Di^i \vf,\\
\Box^0\vf &=\vf,   &\Box^{i+1}\vf&=\Box\Box^i\vf, &\Box^{\leq m} \vf
&= \bigwedge_{i=0}^{m} \Box^i \vf.
\end{align*}
Легко проверить следующий факт.
\begin{proposition}\label{prop:mn-def}~
\begin{itemize}
\item
$(W,R)$ "--- $m$-транзитивная шкала $~\iff~$ $(W,R)\mo \Di^{m+1} p \imp \Di^{\leq m} p$.
\item
$(W,R)$ "--- $(m,n)$-шкала $~\iff~$ $(W,R)\mo \Di^n p \imp \Di^m p$.
\end{itemize}
\end{proposition}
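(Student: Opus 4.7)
The proof is a routine modal correspondence argument of Sahlqvist type, and both items have the same shape, so I would handle them in parallel.

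First, I would record the semantic computation
\[
 \mM,w\mo \Di^k\vf ~\iff~ \EE u~ (w R^k u \textrm{ � } \mM,u\mo\vf),
\]
which is proved by an easy induction on $k$ from the semantics of $\Di$. As a consequence, $\mM,w\mo \Di^{\leq m} p$ iff there exist $v$ and $i\leq m$ with $wR^i v$ and $v\in \theta(p)$. These facts turn the two formulas on the right-hand sides into straightforward assertions about the relation.

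For the $(m,n)$-item, the $(\Imp)$ direction is immediate: if $R^n\subseteq R^m$ and $\mM,w\mo \Di^n p$, pick a witness $v$ with $wR^n v$ and $v\in\theta(p)$; then $wR^m v$, so $\mM,w\mo \Di^m p$. For $(\Leftarrow)$, assume the formula is valid and pick any $w,v$ with $wR^n v$. Consider the model $\mM=(W,R,\theta)$ with $\theta(p)=\{v\}$. Then $\mM,w\mo \Di^n p$, hence $\mM,w\mo \Di^m p$, so there is $u$ with $wR^m u$ and $u\in\theta(p)$, forcing $u=v$ and $wR^m v$. The $m$-pretransitive item goes the same way after noting (as already observed in the paper) that $R^*=R^{\leq m}$ is equivalent to $R^{m+1}\subseteq R^{\leq m}$, since the inclusions $R^i\subseteq R^*$ are automatic and $R^{m+1}\subseteq R^{\leq m}$ implies by induction $R^{k}\subseteq R^{\leq m}$ for every $k\geq 0$.

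There is no real obstacle here; the only point worth a brief remark is the translation between $R^*=R^{\leq m}$ and $R^{m+1}\subseteq R^{\leq m}$, and the use of the singleton valuation $\theta(p)=\{v\}$, which is the standard device making such frame conditions first-order definable by the corresponding modal formulas.
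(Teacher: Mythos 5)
Your proposal is correct. The paper states Proposition~\ref{prop:mn-def} without any proof, treating it as a standard Sahlqvist-type correspondence fact, and your argument --- unfolding $\Di^k$ semantically and using the singleton valuation $\theta(p)=\{v\}$ for the frame-condition direction, together with the observation (already made in the paper) that $m$-pretransitivity is equivalent to $R^{m+1}\subseteq R^{\leq m}$ --- is exactly the standard argument that is being taken for granted.
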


Известно, что для предпорядков свойство $\h(F)\leq h$ можно выразить модальной формулой, см., например, \cite[Proposition 3.44]{Ch:Za:ML:1997}.
А именно, определим по индукции формулы $B_h$, $h>0$:
$$
 B_1 = p_1 \to \Box\Di p_1, ~~ B_{h+1} = p_{h+1} \to  \Box(\Di p_{h+1} \lor B_h).
$$
%Эти формулы называются \emph{формулами конечной высоты}.
%\footnote{Или {\em глубины}. Так, в \cite{Ch:Za:ML:1997} эти формулы называются {\em formulas of finite depth}.  }}.
Для любого предпорядка $F$,
$$F\mo B_h \iff \h(F)\leq h.$$
Опишем аналогичные формулы для $m$-транзитивных шкал.
Пусть $B_h(m)$ обозначает формулу $B_h$, записанную через операторы
$\Box^{\leq m}$ и $\Di^{\leq m}$:
$$
B_1(m) = p_1 \to \Box^{\leq m}\Di^{\leq m} p_1,~~ B_{h+1}(m) = p_{h+1} \to  \Box^{\leq m}(\Di^{\leq m} p_{h+1} \lor B_h(m)).
$$
Очевидно, что в любой модели над $m$-транзитивной шкалой имеем
\[
\begin{array}{lll}
M,x\mo \Di^{\leq m} \vf  &\iff &  \EE y (xR^*y \textrm{ и } M,y\mo \vf),\\
M,x\mo \Box^{\leq m} \vf  &\iff &  \AA y (xR^*y \Imp M,y\mo \vf).
\end{array}
\]
Из этого следует
\begin{proposition}\label{prop:height}
Пусть $F$ "--- $m$-транзитивная шкала. Тогда
$$F\mo B_h(m)  \iff ~ \h(F)\leq h.$$
\end{proposition}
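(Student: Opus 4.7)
The plan is to reduce Proposition~\ref{prop:height} to the standard height-$h$ axiomatization for preorders, applied to the reflexive-transitive closure $F^{*} := (W, R^{*})$. The $m$-pretransitivity of $F$ is exactly the condition $R^{*} = R^{\leq m}$, so the two equivalences displayed immediately before the statement say that in any model over $F$ the operators $\Di^{\leq m}$ and $\Box^{\leq m}$ compute, respectively, the modal diamond and box of $R^{*}$.

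First I would introduce the translation $\tau$ on modal formulas that replaces every occurrence of $\Di^{\leq m}$ by $\Di$ and every $\Box^{\leq m}$ by $\Box$, leaving propositional variables and boolean connectives unchanged. A routine induction on $\psi$, whose only non-trivial cases are the two modal ones and are handled by the equivalences above, shows that
\[
(W, R, \theta),\, x \mo \psi \iff (W, R^{*}, \theta),\, x \mo \tau(\psi)
\]
for every valuation $\theta$ on $W$ and every $x \in W$. Since $\tau(B_h(m)) = B_h$, quantifying over $\theta$ and $x$ yields $F \mo B_h(m) \iff F^{*} \mo B_h$.

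Second, $F^{*}$ is a preorder, and its skeleton coincides with $sk F$: the equivalence $\sim_{R^{*}}$ equals $\sim_R$, and the induced partial order $\leq_{R^{*}}$ equals $\leq_R$. Hence $\h(F^{*}) = \h(sk F^{*}) = \h(sk F) = \h(F)$. Applying the standard fact \cite[Proposition 3.44]{Ch:Za:ML:1997} to the preorder $F^{*}$ gives $F^{*} \mo B_h \iff \h(F^{*}) \leq h$. Concatenating the equivalences,
\[
F \mo B_h(m) \iff F^{*} \mo B_h \iff \h(F^{*}) \leq h \iff \h(F) \leq h,
\]
which proves the proposition. There is no real obstacle: the whole content is the reduction through the closure $F^{*}$, and the inductive translation is immediate once one notes that $m$-pretransitivity is precisely the property allowing $\Di^{\leq m}$ and $\Box^{\leq m}$ to internalize $R^{*}$.
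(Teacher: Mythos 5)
Your proposal is correct and follows essentially the same route as the paper: the paper, too, reduces $B_h(m)$ to $B_h$ over the preorder $(W,R^*)$ via the displayed equivalences stating that in an $m$-pretransitive model $\Di^{\leq m}$ and $\Box^{\leq m}$ are exactly the modalities of $R^*$, and then invokes the standard fact that $B_h$ defines height $\leq h$ on preorders (the identification $\h(F)=\h(W,R^*)$ being immediate from the definition of height through the skeleton). The only difference is that you spell out the translation $\tau$ and the induction explicitly, which the paper leaves as an observation.
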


Класс шкал $\clF$ {\em модально определим множеством формул} $\Phi$, если
$$F\in \clF ~\iff~ F\mo \Phi.$$

\begin{proposition}\label{prop:twoclasses-defined}
Пусть $m,n,h\geq 1$.
Класс $\clG(m,h)$ модально определим формулами $\{\Di^{m+1} p \imp \Di^{\leq m} p, ~B_h(m)\}$.
При $n>m$ класс $\clF(m,n,h)$ модально определим формулами $\{\Di^n p \imp \Di^m p, ~B_h(n-1)\}$.
%//При $m\geq n$ класс шкал $\clF(m,n,h)$ не является модально определимым.
\end{proposition}
\begin{proof}
Из предложений \ref{prop:mn-def} и \ref{prop:height}.
\end{proof}

\subsection{Предтранзитивные логики}
Напомним, что множество модальных формул $\Lvar$
называется {\em логикой}
(точнее "--- {\em пропозициональной нормальной модальной логикой}), если
\begin{itemize}
\item $\Lvar$ содержит все пропозициональные тавтологии,
\item формулы $\neg\Di \bot$ и $\Di (p_1\vee p_2)\imp\Di  p_1\vee \Di  p_2$
\item и замкнуто относительно правила Modus Ponens,
правила подстановки и {\em правила монотонности}:
\[
\vf\imp \psi\in \Lvar \Imp \Di \vf \imp \Di \psi\in \Lvar.
\]\end{itemize}
Наименьшая логика обозначается $\LK{}$.
Если $\Lvar$ "--- логика, $\Phi$ "--- множество формул, то
$\Lvar+\Phi$ обозначает наименьшую логику, содержащую $\Lvar\cup\Phi$.

Логика всякого класса шкал является логикой в смысле этого определения; такая логика называется {\em полной по Крипке}.
%Если $\Psi$ состоит из единственной формулы $\vf$, то вместо
%$\Lvar+\{\vf\}$ будем писать $\Lvar+\vf$.
Напомним \cite{Ch:Za:ML:1997}, что $\LS{4}=\LK{}+\{p\imp \Di p, \Di \Di p\imp \Di p\}$ "--- логика класса всех (конечных) предпорядков,
$\LS{5}=\LS{4}+\{p\imp \Box \Di p\}$ "--- логика класса всех (конечных) шкал, в которых отношение является эквивалентностью.

Из теоремы Салквиста (см., например, \cite[Theorem 10.30]{Ch:Za:ML:1997}) следует
\begin{proposition}\label{prop:sahlq-compl} Для всех $m,n\geq 0$,
$\Log \clF(m,n) =\LK{}+\{\Di^{n} p \imp \Di^{m} p\}$,
$\Log \clG(m) =\LK{}+\{\Di^{m+1} p \imp \Di^{\leq m} p\}$.
\end{proposition}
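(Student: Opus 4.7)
The plan is to invoke Sahlqvist's correspondence and completeness theorem, which underlies \cite[Theorem 10.30]{Ch:Za:ML:1997}. First I would observe that both axioms $\Di^n p \imp \Di^m p$ and $\Di^{m+1} p \imp \Di^{\leq m} p$ are Sahlqvist formulas: each has the shape $\Di^{k} p \imp \psi(p)$, where the antecedent is a sequence of diamonds applied to a single propositional variable (in particular, it is boxed-atom free, hence a Sahlqvist antecedent) and the consequent is positive in $p$ (an iterated diamond, or a disjunction of such). This places both axioms in the Sahlqvist class, which guarantees simultaneously first-order definability of the class of valid frames and Kripke completeness of the corresponding axiomatic extension of $\LK{}$.

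Next, the first-order correspondents should be computed by the standard minimal-valuation algorithm. The axiom $\Di^n p \imp \Di^m p$ corresponds to $\forall x \forall y \, (x R^n y \imp x R^m y)$, i.e., to $R^n \subseteq R^m$, while $\Di^{m+1} p \imp \Di^{\leq m} p$ corresponds to $R^{m+1} \subseteq R^{\leq m}$, i.e., to $m$-pretransitivity. By definition these are precisely the frame classes $\clF(m,n)$ and $\clG(m)$; in fact these correspondences are already recorded in Proposition~\ref{prop:mn-def}, which simultaneously yields the soundness direction.

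Combining the ingredients: the inclusion $\supseteq$ in each equality follows from soundness, via Proposition~\ref{prop:mn-def} together with validity of the $\LK{}$-axioms on every frame. The reverse inclusion $\subseteq$ is the content of Sahlqvist completeness: any formula valid on the class of frames satisfying the correspondent is derivable in the axiomatic extension. No substantive obstacle is anticipated --- the only step requiring care is verifying that both axioms genuinely fit the Sahlqvist syntactic pattern used in the cited theorem, and in both cases this is immediate by inspection, so no further work beyond citing the standard theory is needed.
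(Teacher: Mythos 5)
Your proposal is correct and coincides with the paper's argument: the paper likewise derives both equalities by noting the axioms are Sahlqvist formulas, citing \cite[Theorem 10.30]{Ch:Za:ML:1997} for completeness and Proposition~\ref{prop:mn-def} for the first-order correspondents. The only nitpick is terminological: the antecedents $\Di^n p$ and $\Di^{m+1}p$ are Sahlqvist antecedents because they are diamonds applied to a \emph{boxed atom} (here the atom $p$ itself), not because they are ``boxed-atom free''.
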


\begin{definition}
Логика называется {\em предтранзитивной}, если для некоторого $m\geq 0$ она содержит {\em формулу $m$-транзитивности} $\Di^{m+1} p \imp \Di^{\leq m} p$;
$\trm(\Lvar)$ обозначает наименьшее такое $m$.
\end{definition}
В силу предложения \ref{prop:mn-def}, предтранзитивными являются все логики $\Log \clG(m)$, $m\geq 0$.

\begin{proposition} Пусть $m,n\geq 0$, $\Lvar=\Log \clF(m,n)$.
\begin{itemize}
\item При $n>m$ $\Lvar$ предтранзитивна и $\trm(\Lvar)= n-1$.
\item При $n\leq m$ $\Lvar$ не предтранзитивна.
\end{itemize}
\end{proposition}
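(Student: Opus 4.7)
Split the proposition into the cases $n>m$ and $n\le m$, in each producing a single witness frame. For $n>m$, the upper bound $\trm(\Lvar)\le n-1$ is immediate: the inclusion $\clF(m,n)\subseteq\clG(n-1)$ noted before Theorem~\ref{thm:fmp-two-classes}, together with Proposition~\ref{prop:mn-def}, yields $\Di^{n}p\to\Di^{\le n-1}p\in\Lvar$. For the matching lower bound I would exhibit a single $(m,n)$-frame of transitivity degree exactly $n-1$: the ``rho-shaped'' deterministic frame $F_{m,n}=(W,R)$ with $W=\{0,1,\dots,n-1\}$ and $R=\{(i,i+1):0\le i<n-1\}\cup\{(n-1,m)\}$, i.e. a linear tail of length $m$ glued to a directed cycle of length $n-m$. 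Since the frame is functional, $R^{j}$ is described by the $j$-th iterate $\phi^{j}$ of the successor map, and using the cycle period $n-m$ one checks $\phi^{n}=\phi^{m}$ pointwise on $W$; therefore $R^{n}=R^{m}$ and $F_{m,n}\in\clF(m,n)$.

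The transitivity degree of $F_{m,n}$ is exactly $n-1$: from vertex $0$ the unique forward path visits $0,1,\dots,n-1$ in turn, so $\phi^{j}(0)=j$ for $0\le j\le n-1$; consequently $(0,n-1)\in R^{n-1}$ but $(0,n-1)\notin R^{j}$ for any $j<n-1$, showing $R^{n-1}\not\subseteq R^{\le n-2}$. Hence $F_{m,n}$ is not $(n-2)$-pretransitive; since $\Lvar\subseteq\Log F_{m,n}$, no axiom $\Di^{k+1}p\to\Di^{\le k}p$ with $k<n-1$ lies in $\Lvar$, and we obtain $\trm(\Lvar)=n-1$. The delicate step here is the choice of witness: it must simultaneously lie in $\clF(m,n)$ and attain its maximum diameter $n-1$ along a unique forward path; the ``rho'' construction achieves both by combining a tail of length $m$ with a cycle of length $n-m$.

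For $n\le m$, I would use the single infinite frame $F=(\ZZ,R)$ with $R=\{(i,j):|i-j|\le 1\}$. Since $R$ is reflexive, $R^{k}\subseteq R^{k+1}$ for every $k$, so $R^{n}\subseteq R^{m}$ whenever $n\le m$ and $F\in\clF(m,n)$; more explicitly, $R^{k}=\{(i,j):|i-j|\le k\}$ for $k\ge 1$. On the other hand $R^{*}=\ZZ\times\ZZ$, while $R^{\le k}$ only contains pairs at distance at most $k$, so $(0,k+1)\in R^{k+1}\setminus R^{\le k}$ for every $k$. Thus $F$ is not $k$-pretransitive for any $k$, no pretransitivity axiom belongs to $\Log F$, and hence none belongs to $\Lvar\subseteq\Log F$.
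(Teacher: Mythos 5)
Your proof is correct and has the same skeleton as the paper's: the upper bound $\trm(\Lvar)\le n-1$ from the inclusion $\clF(m,n)\subseteq\clG(n-1)$ together with Proposition~\ref{prop:mn-def}, and then one concrete witness frame for the lower bound and one for the failure of pretransitivity when $n\le m$. The difference is only in the witnesses, and there the paper's choices are lighter. For the lower bound the paper takes the irreflexive chain $(\{1,\dots,n\},R)$ with $iRj\iff j=i+1$: there $R^n=\emp$, so membership in $\clF(m,n)$ is immediate for every $m$ with no computation at all, while the pair $(1,n)\in R^{n-1}\setminus R^{\le n-2}$ refutes $l$-pretransitivity for all $l<n-1$ exactly as your pair $(0,n-1)$ does. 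Your rho-shaped frame achieves the stronger property $R^n=R^m$, but at the price of the modular-arithmetic check of $\phi^n=\phi^m$ on tail and cycle (which is correct, including the degenerate cases $m=0$ and $n-m=1$); that extra precision buys nothing here. For $n\le m$ the paper uses the reflexive chain on $\mathbb{N}$ where you use the reflexive symmetric line on $\ZZ$; both rest on the same two observations, that reflexivity gives $R^k\subseteq R^{k+1}$ and hence $R^n\subseteq R^m$, while $R^{\le k}$ never exhausts $R^*$. One step you leave implicit and might spell out: passing from $R^{n-1}\not\subseteq R^{\le n-2}$ to the failure of $k$-pretransitivity for every $k<n-1$ uses that $R^{k+1}\subseteq R^{\le k}$ forces $R^*=R^{\le k}\subseteq R^{\le n-2}$.
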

\begin{proof}
Если $(W,R)\in \clF(m,n)$ и $n>m$, то $R^n\subseteq R^{\leq n-1}$. %Это равносильно тому,
%что $R^*=R^{\leq n-1}$, поэтому $\Lvar$ содержит формулу $(n-1)$-транзитивности (Предложение \ref{prop:mn-def}).
Поэтому $\Lvar$ предтранзитивна  и $\trm(\Lvar)\leq n-1$.
С другой стороны, шкала $(\{1,\dts,n\},R)$, где $iRj \iff j=i+1$, является $(m,n)$-шкалой, поскольку $R^n=\emp$. Однако она не является $l$-транзитивной ни для какого $l<n-1$.

Чтобы доказать второй пункт, рассмотрим шкалу $(\mathbb{N},R)$,
где $$R=\{(i,i+1)\mid i\in \mathbb{N}\}\cup \{(i,i)\mid i\in \mathbb{N}\}.$$
В этой шкале пути можно произвольно удлинять, но нельзя сокращать, то есть эта шкала яляется
$(m,n)$-шкалой для всех $n<m$, но ни для какого $l$ не является $l$-транзитивной.
\end{proof}

\hide{
[Оно надо? ]
\begin{proposition}\cite{ShefSkvGab} ~\label{lem:pretr}
Логика $\Lvar$ предтранзитивна тогда и только тогда,  когда существует формула $\chi(p)$ с одной переменной $p$, такая, что в любой модели
Крипке $M$, если \/ $M\mo \Lvar$, то для любой точки $w$ в $M$
\[
M,x\mo\chi(p) \iff M^x\mo p.
\]
\end{proposition}
Для $m$-транзитивной логики роль $\chi(p)$ [...почему] выполняет формула $\Di^{\leq m} p$.

}

\subsection{Теорема Гливенко и финитная аппроксимируемость}
Теорема Гливенко утверждает, что выводимость формулы $\vf$ в классической логике высказываний равносильна выводимости $\neg\neg\vf$ в интуиционистской логике. Аналогичная сводимость имеет место для $\LS{4}$ и $\LS{5}$ \cite{Mats-s4s5}:
$$\vf\in \LS{5}~\iff~\Di\Box \vf\in \LS{4}.$$
В семантике Крипке интуиционистская логика "--- это логика частичных порядков, а классическая "--- это логика частичных порядков высоты 1  (см., например, \cite{Ch:Za:ML:1997}).
Аналогично,  $\LS{4}$ "--- это логика предпорядков, $\LS{5}$ "--- логика отношений эквивалентности, то есть предпорядков высоты 1.
Сформулируем  аналог теоремы Гливенко для предтранзитивных логик.

Пусть $\Lvar$ "--- предтранзитивная логика, $m=\trm(\Lvar)$. Положим $$\Lvar[h]=\Lvar+\{B_h(m)\}.$$
Положим также $\Di^* \vf=\Dim\vf $, $\Box^* \vf=\Boxm \vf$.
\begin{theorem}\label{thm:Glivenko} Если $\Lvar$ "--- предтранзитивная логика, то
$$\vf\in \Lvar[1] ~\iff~ \Di^* \Box^* \vf\in \Lvar.$$
\end{theorem}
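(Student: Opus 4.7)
The strategy is to argue both implications semantically, using Kripke frames and, for the harder direction, the canonical model of $\Lvar$.

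\textbf{Direction $(\Leftarrow)$.} Assume $\Di^*\Box^*\vf \in \Lvar$ and fix an arbitrary $\Lvar[1]$-frame $F$ with an arbitrary valuation and a point $w$. By Proposition~\ref{prop:height}, $F$ is an $\Lvar$-frame of height at most $1$; equivalently, any two $R^*$-related points of $F$ lie in a common $\sim_R$-cluster. Since $F\mo\Lvar$, the hypothesis gives $\Di^*\Box^*\vf$ at $w$, so there is $v$ with $wR^*v$ such that $\Box^*\vf$ holds at $v$. But height $1$ forces $vR^*w$, so $\vf$ holds at $w$. Hence $\vf$ is valid on every $\Lvar[1]$-frame, and Kripke completeness of $\Lvar[1]$ (supplied by Theorem~\ref{thm:compl}, or by canonicity of the Sahlqvist axioms involved) yields $\vf \in \Lvar[1]$.

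\textbf{Direction $(\Rightarrow)$.} I argue by contraposition. Assume $\Di^*\Box^*\vf \notin \Lvar$; in the canonical model $M^c$ of $\Lvar$ there is a maximal $\Lvar$-consistent set $w$ at which $\Di^*\Box^*\vf$ fails. The plan is to exhibit a \emph{terminal} $\sim_R$-cluster $C^*$ above $w$, meaning an MCS $v^*$ with $wR^*v^*$ whose $\sim_R$-class $C^*$ coincides with $R^*(v^*)$. Granting this, $C^*$ is closed under $R$-successors, so $M^c\restr C^*$ is a generated submodel, and truth at its points is inherited from $M^c$. As a single cluster, $C^*$ has height $1$; generated subframes preserve frame validity of modal formulas, so $C^*$ is also an $\Lvar$-frame, hence an $\Lvar[1]$-frame. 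Finally, failure of $\Di^*\Box^*\vf$ at $w$ rules out $\Box^*\vf$ at $v^*$, so some $u \in R^*(v^*)=C^*$ falsifies $\vf$, and this witness survives in the generated submodel, giving $\vf \notin \Lvar[1]$.

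The main obstacle is the existence of $C^*$. I would obtain it by applying Zorn's lemma to the preorder of $\sim_R$-clusters inside $R^*(w)$: given an ascending chain $[y_1]\leq_R[y_2]\leq_R\cdots$ of such clusters with representatives satisfying $y_iR^*y_j$ for $i\leq j$, the set $\bigcup_i\{\psi:\Box^*\psi\in y_i\}$ is $\Lvar$-consistent because its finite subsets already lie in a single $y_i$, and any MCS $y_\infty$ extending it then satisfies $y_iR^*y_\infty$ for every $i$, so $[y_\infty]$ bounds the chain from above. A maximal element supplies $C^*$. The essential point making this Lindenbaum step legitimate is that pretransitivity turns $\Box^*$ into a normal S4-like box, so $\{\psi:\Box^*\psi\in x\}\subseteq y$ already entails $xR^*y$ in $M^c$. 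The remaining verifications --- that the canonical frame of $\Lvar$ is an $\Lvar$-frame and that the Sahlqvist-style frame conditions of $\Lvar$ transfer to generated subframes --- are standard.
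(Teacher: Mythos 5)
There is a genuine gap: your argument presupposes Kripke completeness (in fact canonicity) of $\Lvar$, which the theorem does not assume. The statement is for an \emph{arbitrary} pretransitive normal logic $\Lvar$ --- pretransitivity only says that $\Lvar$ contains one axiom $\Di^{m+1}p\imp\Di^{\leq m}p$; beyond that $\Lvar$ may contain arbitrary further axioms and need not be Kripke complete at all. Your $(\Leftarrow)$ direction concludes by invoking ``Kripke completeness of $\Lvar[1]$, supplied by Theorem~\ref{thm:compl} or by canonicity of the Sahlqvist axioms involved'', but Theorem~\ref{thm:compl} has the hypothesis that $\Lvar$ itself is canonical, and the Sahlqvist shape of $B_1(m)$ says nothing about the other axioms of $\Lvar$. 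Your $(\Rightarrow)$ direction has the same defect in a sharper form: you need the terminal cluster $C^*$ extracted from the canonical model of $\Lvar$ to be an $\Lvar$-frame (hence an $\Lvar[1]$-frame refuting $\vf$), and you defer this to the ``standard verification that the canonical frame of $\Lvar$ is an $\Lvar$-frame'' --- but that is exactly canonicity, and it fails for non-canonical logics. The $(\Rightarrow)$ half could be salvaged by passing to the canonical \emph{general} frame (generated general subframes preserve validity of $\Lvar$, and the cluster validates $B_1(m)$ under all valuations since $R^*$ is universal on it); the $(\Leftarrow)$ half cannot be repaired this way, because a general frame validating $B_1(m)$ need not have an underlying Kripke frame of height $1$. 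Your Zorn's lemma construction of the terminal cluster is fine in itself, but it is solving a problem the paper never has to face.

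The paper's proof is purely syntactic and therefore indifferent to completeness. For $(\Leftarrow)$: the axiom $B_1(m)=p\imp\Box^*\Di^*p$ yields, by substitution and contraposition, $\Di^*\Box^*p\imp p\in\Lvar[1]$, and then $\vf\in\Lvar[1]$ follows from $\Di^*\Box^*\vf\in\Lvar\subseteq\Lvar[1]$ by Modus Ponens --- two lines, no semantics. For $(\Rightarrow)$: induction on a derivation of $\vf$ in $\Lvar[1]$, using the translation $\psi\mapsto\psi^*$ and the fact that $\{\psi\mid\psi^*\in\Lvar\}$ is a normal logic containing $\LS{4}$; the axiom $B_1(m)$ is handled by $\Di\Box(p\imp\Box\Di p)\in\LS{4}$, Modus Ponens by $\Di\Box p\wedge\Box\Di\Box(p\imp q)\imp\Di\Box q\in\LS{4}$, and monotonicity by validity of the corresponding schema on $m$-pretransitive frames via Proposition~\ref{prop:sahlq-compl}. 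If you want to keep a semantic proof, you must first restrict the claim to canonical (or at least complete) $\Lvar$, which would weaken the theorem.
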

\begin{proof}
Для формулы $\psi$ пусть $\psi^*$ обозначает результат
замены в $\psi$ $\Di$ на $\Di^*$ (соответственно, $\Box$ заменяется на $\Box^*$).
Известно, что множество $\{\psi \mid \psi^* \in \Lvar\}$ является логикой и содержит $\LS{4}$ (см., например, \cite{ShefSkvGab}).
\ISH{Проверить, уточнить ссылку}.

Пусть $\Di^*\Box^*\vf \in \Lvar$. Тогда $\Di^*\Box^*\vf \in \Lvar[1]$.
Поскольку $p\imp \Box^* \Di^*p \in \Lvar[1]$, то $\Di^*\Box^* p \imp p\in\Lvar[1]$. Следовательно,  $\Di^*\Box^* \vf \imp \vf\in \Lvar[1]$ и
$\vf\in \Lvar[1]$.

Доказательство в обратную сторону проведём индукцией по
длине вывода формулы $\vf$ в $\Lvar[1]$.

Допустим, что $\vf=p\imp \Box^*\Di^*p$.
Формула $\Di\Box(p\imp \Box \Di p)$ общезначима на любом конечном предпорядке, поэтому $\Di\Box(p\imp \Box \Di p)\in \LS{4}$,
и поэтому $\Di^*\Box^*\vf\in \Lvar$.

Случай, когда $\vf$ получена в результате применения правила подстановки, тривиален. Рассмотрим случаи применения правил
Modus Ponens и монотонности.

Пусть $\psi, ~ \psi  \imp\vf \in \Lvar[1]$ для некоторой $\psi$. По
предположению индукции, $\Di^*\Box^*\psi ,Ё\Di^*\Box^*(\psi \imp \vf) \in\Lvar$.
Тогда $\Box^*\Di^*\Box^*(\psi \imp
\vf)\in \Lvar$ (\cite[Lemma 1.3.45]{ShefSkvGab}). Формула $\Di\Box p\wedge
\Box\Di\Box (p\imp q) \imp \Di\Box q$ выводима в $\LS{4}$, поскольку
общезначима в любом предпорядке. Поэтому $\Di^*\Box^* \vf\in \Lvar$.

Предположим, что $\vf=\Di \psi_1\imp \Di\psi_2$, $\psi_1\imp \psi_2 \in \Lvar[1]$.
Покажем, что $\Di^*\Box^*(\Di \psi_1\imp \Di \psi_2) \in \Lvar$.
По предположению индукции,
$\Di^*\Box^*(\psi_1\imp \psi_2) \in \Lvar$.
Пусть $m=\trm(\Lvar)$.
В силу предложения \ref{prop:sahlq-compl}, $\Log G(m) \subseteq \Lvar$.
Формула $$\Dim\Boxm(\psi_1\imp \psi_2)\imp \Dim \Boxm(\Di\psi_1\imp \Di \psi_2)$$
общезначима в любой $m$-транзитивной шкале, поэтому принадлежит $\Lvar$. Следовательно,
$\Dim\Boxm \vf\in \Lvar$.
\end{proof}

Напомним понятие порождённой подмодели, см. \cite{Ch:Za:ML:1997}.
%{\em Сужением $R$ на $V$} называется
%отношение $R\restr V=R\cap (V\times V)$ \emph{Сужением шкалы}
%$F=(W,R)$ на непустое $V\subseteq W$ называется шкала $(V,R\restr
%V);$ обозначение: $F\restr V$.
Пусть $F=(W,R)$ "--- шкала, $M=(F,\theta)$ "--- модель.
%Для $x\in W$ положим $R(x)=\setdef[y]{xRy}$.
\emph{Сужением} $M$
на $V\neq \emp$ называется модель $M\restr V=(F\restr V, \theta')$, где
$\theta'(p)=\theta(p)\cap V$ для всех  $p\in PV$.
Для $V\subseteq W$, $R(V)=\{y\mid \EE x\in V ~xRy\}$  "--- {\em образ $V$ относительно $R$}.
Если $R(V)\subseteq V$,
то $F\restr V$ и $M\restr V$ называются {\em порождённой подшкалой и подмоделью $F$ и
$M$}, соответственно. Имеет место следующий факт.
\begin{proposition}[Лемма о порождённой подмодели]\label{lem:gen-sub}
Пусть $F=(W,R)$, $M=(F,\theta)$, $V$ "--- непустое подмножество $W$,
$R(V)\subseteq V$. Тогда:
\begin{enumerate}
\item если $x\in V$, то  $M,x\mo\vf~\iff~ M\restr V,x\mo\vf;$
\item если $F\mo \vf$, то $F\restr V \mo \vf$.
\end{enumerate}
\end{proposition}

Теорема \ref{thm:Glivenko}  даёт необходимое условие разрешимости и финитной аппроксимируемости предтранзитивных логик.
\begin{corollary}
Если $\Lvar$ разрешима, то $\Lvar[1]$ разрешима.
Если $\Lvar$ финитно аппроксимируема, то $\Lvar[1]$ финитно аппроксимируема.
\end{corollary}
\begin{proof}
Первое утверждение немедленно следует из теоремы \ref{thm:Glivenko}.

Пусть $\Lvar$ финитно аппроксимируема. Покажем, что $\Lvar[1]$ "--- логика класса всех конечных $\Lvar$-шкал высоты $1$.
Пусть $\vf\notin \Lvar[1]$. Тогда $\Di^*\Box^*\vf \notin \Lvar$ в силу теоремы \ref{thm:Glivenko}.
Тогда $\Box^*\Di^*\neg \vf$ истинна в одной из точек некоторой модели $M$ над какой-то конечной $\Lvar$-шкалой $F$. Тогда
$\neg\vf$ истинна в некоторой точке $x$ одного из максимальных сгустков $C$ модели $M$. По лемме о порождённой подмодели,
$M\restr C,x\mo \neg\vf$ и $F\restr C$ является $\Lvar$-шкалой. Осталось заметить, что $F\restr C$ "--- шкала высоты $1$.
\end{proof}

\begin{proposition}\label{prop:consis}
Пусть $\Lvar$ "--- предтранзитивная логика. Тогда:
\begin{enumerate}
\item
$\Lvar[1]~\supseteq~\Lvar[2]~\supseteq~\Lvar[3]~\supseteq~\dts~\supseteq~\Lvar.$
\item
Если $\Lvar$ непротиворечива, то $\Lvar[1]$ непротиворечива
(и, следовательно, непротиворечивы все логики $\Lvar[h]$, $h\geq 1$).
\end{enumerate}
\end{proposition}
\begin{proof}
Включение $L~\subseteq~\Lvar[h]$ очевидно.
Проверим включение
$\Lvar[h+1]~\subseteq~\Lvar[h]$. Пусть $m=\trm(\Lvar)$.
$B_h(m)\in \Lvar[h]$, следовательно $\Dim p_{h+1}\vee B_h(m)\in \Lvar[h]$.
Тогда $\Boxm(\Dim p_{h+1}\vee B_h(m))\in \Lvar[h]$, из чего получаем
$$B_{h+1}(m)=p_{h+1}\imp\Boxm(\Dim p_{h+1}\vee B_h(m))\in \Lvar[h].$$

Пусть $\Lvar$ непротиворечива. Докажем непротиворечивость $\Lvar[1]$.
Формула $\neg \Di \Box\bot$ общезначима в любом частичном
порядке, поэтому принадлежит $\LS{4}$. Следовательно, $\neg\Dim\Boxm\bot\in \Lvar$.
Поэтому  $\Dim\Boxm\bot\notin \Lvar$. В силу теоремы \ref{thm:Glivenko}, $\bot\notin \Lvar[1]$.
\end{proof}

\subsection{Полнота по Крипке логик конечной высоты}
Приведём одно достаточное условие полноты по Крипке логик, содержащих формулы конечной высоты.

Напомним определение {\em канонической модели непротиворечивой
логики} $\Lvar$. Множество
формул $\Gamma$ называется {\em $\Lvar$-противоречивым}, если $\neg
\bigwedge \Gamma_0 \in \Lvar$ для некоторого конечного
$\Gamma_0\subseteq \Gamma$. $\Gamma$ называется {\em
$\Lvar$-максимальным}, если оно $\Lvar$-непротиворечиво, а любое его
собственное расширение оказывается $\Lvar$-противоречивым. Известно, что
всякое $\Lvar$-непротиворечивое множество содержится в $\Lvar$-максимальном
(лемма Линденбаума). {\em Канонической шкалой} непротиворечивой логики $\Lvar$ называется
шкала $F_\Lvar=(W_\Lvar,R_\Lvar)$, где $W_\Lvar$ "--- множество всех $\Lvar$-максимальных
множеств, а $R_\Lvar$ определяется следующим образом:
$$\Gamma_1 R_\Lvar \Gamma_2 \iff \{ \Di \vf \mid \vf\in \Gamma_2\} \subseteq \Gamma_1.$$
{\em Каноническая модель} $M_\Lvar$ "--- это каноническая шкала с оценкой
$\theta_\Lvar$, где $\theta_\Lvar(p):=\{\Gamma\in W_\Lvar\mid p\in \Gamma\}$.
Известен следующий важный факт (см., например, \cite{blackburn_modal_2002}):
\begin{proposition}[Теорема о канонической модели]\label{prop:canon}
Если $\Lvar$ "--- непротиворечивая логика, то
\begin{enumerate}
\item
$\AA \Gamma \in W_\Lvar  (M_\Lvar,\Gamma\mo \vf~\iff~\vf\in\Gamma)$,
\item
$\Lvar =\{\vf \mid  M_\Lvar\mo \vf\}$.
\end{enumerate}
\end{proposition}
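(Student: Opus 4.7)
\medskip
\noindent\textbf{Набросок доказательства (план).}
План состоит в стандартной индукции по построению формулы для пункта (1), с последующим выводом пункта (2) из (1) и леммы Линденбаума об расширении. В базисе индукции, для пропозициональной переменной $p$, эквивалентность $M_\Lvar,\Gamma\mo p\iff p\in\Gamma$ следует прямо из определения оценки $\theta_\Lvar$. Булевы связки обрабатываются стандартно, опираясь на два свойства всякого $\Lvar$-насыщенного множества $\Gamma$: замкнутость относительно $\Lvar$-выводимости (если $\vdash_\Lvar \psi\imp\chi$ и $\psi\in\Gamma$, то $\chi\in\Gamma$) и то, что для любой формулы $\psi$ ровно одно из $\psi,\neg\psi$ лежит в $\Gamma$; отсюда $\neg\psi\in\Gamma\iff\psi\notin\Gamma$ и $\psi_1\wedge\psi_2\in\Gamma\iff\psi_1,\psi_2\in\Gamma$.

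Главный шаг --- случай $\vf=\Di\psi$. Импликация $M_\Lvar,\Gamma\mo\Di\psi\Imp \Di\psi\in\Gamma$ получается непосредственно из определения $R_\Lvar$ и индукционного предположения: если $\Gamma R_\Lvar \Delta$ и $M_\Lvar,\Delta\mo\psi$, то $\psi\in\Delta$, откуда $\Di\psi\in\Gamma$. Содержательна обратная импликация, то есть {\em лемма о существовании}: если $\Di\psi\in\Gamma$, то найдётся $\Delta\in W_\Lvar$ с $\Gamma R_\Lvar \Delta$ и $\psi\in\Delta$. Я бы рассмотрел множество $\Sigma=\{\psi\}\cup\{\neg\chi\mid \Di\chi\notin\Gamma\}$ и доказал его $\Lvar$-непротиворечивость от противного. Если $\vdash_\Lvar \psi\imp\chi_1\vee\dts\vee\chi_k$ для некоторых $\chi_1,\dts,\chi_k$ с $\Di\chi_i\notin\Gamma$, то по правилу монотонности получится $\vdash_\Lvar \Di\psi\imp\Di(\chi_1\vee\dts\vee\chi_k)$, а затем, применяя аксиому $\Di(p_1\vee p_2)\imp\Di p_1\vee\Di p_2$ нормальной модальной логики, --- $\vdash_\Lvar \Di\psi\imp\Di\chi_1\vee\dts\vee\Di\chi_k$. Из $\Di\psi\in\Gamma$ и максимальной непротиворечивости $\Gamma$ тогда хотя бы одно $\Di\chi_i$ лежит в $\Gamma$, что противоречит выбору $\chi_i$. После этого стандартная лемма Линденбаума продолжает $\Sigma$ до $\Lvar$-насыщенного $\Delta\in W_\Lvar$, и по построению $\psi\in\Delta$ и $\Gamma R_\Lvar \Delta$.

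Пункт (2) следует из (1). Если $\vf\in\Lvar$, то $\vf\in\Gamma$ для любого $\Gamma\in W_\Lvar$ (иначе $\{\neg\vf\}\cup\Gamma$ было бы $\Lvar$-непротиворечиво, противореча $\vf\in\Lvar$), откуда по (1) $M_\Lvar\mo\vf$. Обратно, если $\vf\notin\Lvar$, то $\{\neg\vf\}$ $\Lvar$-непротиворечиво и по лемме Линденбаума продолжается до $\Lvar$-насыщенного $\Gamma\in W_\Lvar$, для которого $\vf\notin\Gamma$ и по (1) $M_\Lvar,\Gamma\not\mo\vf$. Главным препятствием в намеченной схеме является именно лемма о существовании для модального случая --- единственное место, где принципиально используется нормальность логики $\Lvar$ (правило монотонности $\Di$ и аксиома дистрибутивности $\Di$ над дизъюнкцией).
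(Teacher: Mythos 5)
Your proof is correct and is exactly the standard canonical-model argument (truth lemma by induction on the formula, with the existence lemma for $\Di$ proved via the consistency of $\set{\psi}\cup\set{\neg\chi\mid \Di\chi\notin\Gamma}$ and Lindenbaum's lemma); the paper itself gives no proof and simply cites this classical fact from \cite{blackburn_modal_2002}, which is precisely the argument you reproduce. No discrepancies to report.
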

Логика называется {\em канонической}, если она общезначима в своей канонической шкале. В этом случае она полна по Крипке.

\begin{proposition}\label{prop:canon-R-trans}
Пусть $F=(W,R)$ "--- каноническая шкала логики $\Lvar$.
\begin{enumerate}
\item Для любого $i\geq 0$,
$$ x R^i y \iff \AA\vf (\vf\in y \Imp \Di^i \vf\in x).$$
\item
Если $\Lvar$ $m$-транзитивна, то
 $$x R^* y \iff \AA\vf (\vf\in y \Imp \Di^{\leq m} \vf\in x).$$
\end{enumerate}
\end{proposition}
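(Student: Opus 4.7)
The plan is to prove (1) by induction on $i$ and then derive (2) using pretransitivity. For (1), the base case $i=0$ reduces to $x=y \iff \forall \vf(\vf \in y \Rightarrow \vf \in x)$, which is immediate from maximal consistency (two distinct maximal consistent sets cannot be comparable by inclusion, since if $\psi \in x \setminus y$, then $\neg\psi \in y \subseteq x$ contradicts consistency of $x$). The forward direction of the inductive step is routine: decompose $xR^{i+1}y$ as $xR_\Lvar z \wedge zR^i y$, apply the inductive hypothesis to obtain $\Di^i\vf \in z$ for every $\vf \in y$, and then use the definition of $R_\Lvar$ to deduce $\Di^{i+1}\vf = \Di\Di^i\vf \in x$.

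The heart of the proof is the reverse direction of the inductive step — a canonical existence-style argument. Given $\forall \vf (\vf \in y \Rightarrow \Di^{i+1}\vf \in x)$, I would construct a witness $z$ with $xR_\Lvar z$ and $zR^i y$ by extending $\Sigma := \{\psi : \Box\psi \in x\} \cup \{\Di^i\vf : \vf \in y\}$ to a maximal consistent set. The first half of $\Sigma$ forces $xR_\Lvar z$ via the standard dual equivalence $\{\Di\vf : \vf \in z\} \subseteq x \iff \{\psi : \Box\psi \in x\} \subseteq z$, and the second half together with the inductive hypothesis yields $zR^i y$. The main obstacle is verifying consistency of $\Sigma$: if finitely many $\Box\psi_1,\dots,\Box\psi_k \in x$ and $\vf_1,\dots,\vf_l \in y$ produced an inconsistency, I would replace the $\vf_j$ by the single conjunction $\Phi := \bigwedge_j \vf_j \in y$, use monotonicity of $\Di^i$ to collapse $\bigwedge_j \Di^i\vf_j$ under $\Di^i\Phi$, then necessitate and apply the $K$-axiom to the resulting $\vdash \bigwedge_j \psi_j \to \neg\Di^i\Phi$ to derive $\neg\Di^{i+1}\Phi \in x$, contradicting the hypothesis applied to $\Phi$.

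Part (2) then follows from (1) and $m$-pretransitivity $R^* = \bigcup_{i \leq m} R^i$. The forward direction is immediate: if $xR^iy$ for some $i \leq m$, then part (1) gives $\Di^i\vf \in x$ for every $\vf \in y$, whence $\Di^{\leq m}\vf \in x$. For the reverse, assume $xR^iy$ fails for every $i \leq m$ and pick witnesses $\vf_i \in y$ with $\Di^i\vf_i \notin x$; replacing them by the single conjunction $\Phi := \bigwedge_{i \leq m}\vf_i \in y$, monotonicity yields $\Di^i\Phi \notin x$ for all $i \leq m$, and then maximal consistency of $x$ combined with the tautology $\Di^{\leq m}\Phi \leftrightarrow \bigvee_{i \leq m}\Di^i\Phi$ forces $\Di^{\leq m}\Phi \notin x$, contradicting the hypothesis applied to $\Phi$.
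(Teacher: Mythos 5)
Your proof is correct and follows essentially the same route as the paper: part (1) is the standard induction with the canonical existence lemma (which the paper does not spell out but simply cites from Chagrov--Zakharyaschev, Corollary 5.10), and your part (2) --- the forward direction via part (1), the reverse direction by contraposition with the single conjunction $\vf_0\wedge\dots\wedge\vf_m$ of witnesses --- is the paper's argument almost verbatim. The only step you pass over silently is why the canonical relation itself satisfies $R^*=\bigcup_{i\le m}R^i$: the hypothesis says only that the \emph{logic} contains the axiom $\Di^{m+1}p\to\Di^{\le m}p$, and the paper explicitly invokes Sahlqvist canonicity to transfer validity of this axiom to the canonical frame before reading off $m$-pretransitivity of $R$.
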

\begin{proof} Доказательство первого пункта можно найти в \cite[Corollary 5.10]{Ch:Za:ML:1997}. Проверим второй.

Необходимость. В силу теоремы Салквиста, $F\mo \Di^{m+1} p\imp \Di^{\leq m} p$, поэтому из $xR^*y$ следует
$xR^iy$ для некоторого $i\leq m$. Если в этом случае $\vf\in y$, то, в силу пункта 1, $\Di^i\vf\in x$, и поэтому $\Dim\vf\in x$.

Достаточность.
Предположим, что $x R^* y$ неверно.
Тогда для всех $i\leq m$ неверно, что $x R^i y$. В силу пункта 1,  для каждого $i\leq m$
найдётся формула $\vf_i \in y$ такая, что $\Di^i\vf_i \nin x$.
Пусть $\vf=\vf_0\con\dts\con \vf_m$. Имеем: $\vf\in
y$ и для всех $i\leq m$ $\Di^i\vf  \not\in x$.
Тогда $x$ содержит формулу \mbox{$\neg \vf  \con \neg\Di \vf  \con \dts \con
\neg \Di^m \vf$}. Последнее означает, что $\Dim\vf\notin x$.
\end{proof}

Следующий факт следует непосредственно из определений.
\begin{proposition}\label{prop:canon-sublogic}
Если $\Lvar_1$ и $\Lvar_2$ "--- непротиворечивые логики и $\Lvar_2\subseteq \Lvar_1$,
то каноническая модель $\Lvar_1$ является порождённой подмоделью
канонической модели $\Lvar_2$ и состоит из всех
$\Lvar_2$-максимальных множеств, содержащих $\Lvar_1$.
\end{proposition}

Через $\cone{F}{x}$ обозначается шкала (модель),
%\emph{порождённая точкой $x$}: $\cone{F}{x}=F\restr R^*(x)$.
\emph{порождённая точкой $x$}: $\cone{F}{x}=F\restr \{y \mid x  R^* y\}$, где, как обычно, $R^*$ "--- транзитивное рефлексивное замыкание отношения в шкале.

%Известно, что добавление к расширению $\LS{4}$ формул конечной высоты $B_h$ сохраняет каноничность логики.
%Докажем обобщение этого факта для предтранзитивных логик.

Полнота по Крипке и каноничность расширений $\LS{4}$ формулами $B_h$ известна достаточно давно \cite{Seg_Essay,Max1975}.  Этот факт имеет следующее обобщение.
%Следующая теорема "--- основной результат этого раздела.
\begin{theorem}\label{thm:compl}
Если $L$ предтранзитивная каноническая логика, то для всех $h\geq 1$
канонической оказывается логика $\Lvar[h]$.
\end{theorem}
\begin{proof}
Пусть $M_h=(W_h,R_h,\theta_h)$ обозначает каноническую модель логики
$\Lvar[h]$, $M=(W,R,\theta)$ "--- каноническую модель $\Lvar$.

В силу предложений
\ref{prop:consis} и \ref{prop:canon-sublogic},
$$
M_1\subgen M_2 \subgen M_3\subgen \dts \subgen M,
$$
где $N \subgen M$ означает, что $N$ порождённая подмодель $M$.

Пусть $F$ "--- каноническая шкала логики $L$.
\begin{lemma}
\begin{equation}\label{eq:depth}
M,x\mo \Lvar[h] \iff \h(F,x)\leq h.
\end{equation}
\end{lemma}
\begin{proof}
Если $\h(F,x)\leq h$, то   $\cone{F}{x}\mo \Lvar[h]$ в силу предложения
\ref{prop:height}; следовательно, $\cone{M}{x},x\mo \Lvar[h]$; в силу леммы о
порождённой подмодели, $M,x\mo \Lvar[h].$

Доказательство того, что  из $M,x\mo \Lvar[h]$ следует $\h(F,x)\leq h$
проведём индукцией по высоте $h$.

Пусть $m=\trm(\Lvar)$.

Предположим $M,x\mo \Lvar[1]$.
Тогда в силу предложения \ref{prop:canon-sublogic}, $x\in W_1$. $B_1(m)$ является формулой Салквиста, следовательно $(W_1,R_1)\mo B_1(m)$, что, в силу
предложения  \ref{prop:height}, означает $\h(W_1,R_1)=1$.
Следовательно,
%В силу предложения \ref{prop:subframeHeight},
$\h(F,x)=1$.

Пусть теперь $M,x\mo \Lvar[h+1]$. Если $\h(F,x)=1$, то доказывать
нечего. В противном случае в модели $M_{h+1}$ найдется
$y$ такой, что $[x]<_R [y]$ (тут $[x]$ обозначает сгусток точки $x$ в шкале $(W,R)$, или,
что эквивалентно, в шкале $(W_{h+1},R_{h+1})$).
Рассмотрим модель $\cone{M}{y}$ и покажем, что $\cone{M}{y}\mo \Lvar[h]$. Для этого
достаточно показать, что в любой точке $z$ модели $\cone{M}{y}$ истинны все
подстановки в формулу $B_h(m)$. Пусть $\vf=B_h(m)(\psi_1,\dts,\psi_n)$.
Поскольку $[x]<_R [y]$ и $[y]\leq_R [z]$, то неверно, что $[z]\leq_R
[x]$. Следовательно, неверно,
что $z R^* x$. В силу предложения \ref{prop:canon-R-trans},
последнее означает, что существует формула $\psi$ такая, что
$\psi\in x$ и $\Dim\psi\nin z$. Формула
$$\alpha=\psi\imp \Boxm(\Dim \psi \vee \vf)$$
является результатом подстановки в формулу $B_{h+1}(m)$, и поэтому
$M,x\mo \alpha$. Следовательно,
$$M,x\mo \Boxm(\Dim \psi \vee \vf).$$ Из этого следует, что $$M,z\mo \Dim \psi \vee \vf.$$
Поскольку $M,z\not\mo \Dim \psi$, то $M,z\mo \vf$, что нам и
требовалось.

Таким образом, $\cone{M}{y}\mo \Lvar[h]$ и, в силу предположения индукции,
$\h(F,y)\leq h$. Из этого следует, что $\h(F,x)\leq h+1$.
\end{proof}

В силу предложения \ref{prop:canon-sublogic}, $M,x\mo \Lvar[h] \iff x\in
W_h$, что, в силу только что доказанной леммы, даёт $x\in W_h\iff
\h(F,x)\leq h$. Это означает, что высота шкалы $(W_h,R_h)$  не
превосходит $h$. В силу
предложения \ref{prop:height}, $(W_h,R_h)\mo \Lvar[h]$.
\end{proof}

\begin{corollary}
Для $n>m \geq 1,~h\geq 1$
\[
\begin{array}{lll}
\Log \clG(m,h) &=&K+\{\Di^{m+1} p \imp \Di^{\leq m} p, ~B_h(m)\},  ~~\\
\Log \clF(m,n,h) &=&K+\{\Di^{n} p \imp \Di^{m} p, ~B_h(n-1)\}.
\end{array}
\]
\end{corollary}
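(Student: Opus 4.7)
The plan is to derive this corollary directly from Theorem \ref{thm:compl} applied to each of the two Sahlqvist-axiomatized base logics from Proposition \ref{prop:sahlq-compl}.

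For the pretransitive family, set $\Lvar = \LK{} + \{\Di^{m+1} p \imp \Di^{\leq m} p\}$. Proposition \ref{prop:sahlq-compl} gives $\Lvar = \Log \clG(m)$; in particular $\Lvar$ is Kripke complete and pretransitive with $\trm(\Lvar) = m$. Hence $\Lvar[h] = \LK{} + \{\Di^{m+1} p \imp \Di^{\leq m} p,\, B_h(m)\}$, which is exactly the right-hand side of the first equality. Theorem \ref{thm:compl} says that $\Lvar[h]$ is Kripke complete, and together with Proposition \ref{prop:height} it shows that the frames of $\Lvar[h]$ are precisely the $m$-pretransitive frames of height at most $h$, namely $\clG(m,h)$. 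Therefore $\Lvar[h] = \Log \clG(m,h)$.

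For the $(m,n)$-family with $n > m$, take $\Lvar' = \LK{} + \{\Di^n p \imp \Di^m p\}$. By Proposition \ref{prop:sahlq-compl} again, $\Lvar' = \Log \clF(m,n)$, and the proposition immediately following it establishes $\trm(\Lvar') = n - 1$. Consequently $\Lvar'[h] = \LK{} + \{\Di^n p \imp \Di^m p,\, B_h(n-1)\}$, so applying Theorem \ref{thm:compl} to $\Lvar'$ and identifying the frame class through Proposition \ref{prop:twoclasses-defined} yields $\Lvar'[h] = \Log \clF(m,n,h)$.

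The only non-formal step is recognizing that the two base logics satisfy the hypotheses of Theorem \ref{thm:compl} — Kripke completeness plus a finite transitivity number — and for this the Sahlqvist content of Proposition \ref{prop:sahlq-compl} is essential. I do not anticipate any genuine obstacle: everything substantive has already been absorbed into Theorem \ref{thm:compl}, and the argument here reduces to matching the height axiom $B_h(m)$ or $B_h(n-1)$ to the correct pretransitivity exponent of the underlying logic.
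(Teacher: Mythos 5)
Your route is the same as the paper's: axiomatize the base logic via Proposition \ref{prop:sahlq-compl}, apply Theorem \ref{thm:compl} to obtain completeness of $\Lvar[h]$, and identify the frame class through Proposition \ref{prop:twoclasses-defined}; the bookkeeping with $\trm(\Lvar)=m$ and $\trm(\Lvar')=n-1$ is also correct. There is, however, one real inaccuracy in the only step you single out as non-formal: the hypothesis of Theorem \ref{thm:compl} is not that the base logic is Kripke complete and pretransitive, but that it is pretransitive and \emph{canonical}, i.e.\ valid in its own canonical frame --- the proof of that theorem is carried out entirely inside the canonical model, so canonicity is what is actually used. Since canonicity is strictly stronger than Kripke completeness, ``Kripke completeness plus a finite transitivity number'' does not by itself license the application of Theorem \ref{thm:compl}. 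The gap closes immediately: the Sahlqvist theorem standing behind Proposition \ref{prop:sahlq-compl} gives canonicity (not merely completeness) of $\LK{}+\{\Di^{m+1}p\imp\Di^{\leq m}p\}$ and of $\LK{}+\{\Di^{n}p\imp\Di^{m}p\}$, and this is exactly the fact the paper's own one-line proof invokes. With ``Kripke complete'' replaced by ``canonical'' in your verification of the hypotheses, your argument coincides with the paper's.
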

\begin{proof}
В силу теоремы Салквиста, логики $K+\{\Di^{m+1} p \imp \Di^{\leq m} p\}$ и
$K+\{\Di^{n} p \imp \Di^{m} p\}$ являются каноническими. В силу теоремы \ref{thm:compl}, их расширения формулами конечной высоты являются полными по Крипке.
По предложению \ref{prop:twoclasses-defined},  $\clG(m,h)$ "--- класс всех шкал первой логики,  $\clF(m,n,h)$ "--- второй.
\end{proof}

Для конечно аксиоматизируемой логики её финитная аппроксимируемость даёт её разрешимость (теорема Харропа).
\begin{theorem}\label{thm:complete-ax-twofam}
Для  $n>m \geq 1,~h\geq 1$,  логики классов $\clF(m,n,h)$ и $\clG(m,h)$  разрешимы.
\end{theorem}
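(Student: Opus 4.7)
The plan is to combine the two ingredients already assembled in the preceding sections: the finite model property established in Theorem~\ref{thm:fmp-two-classes} and the finite axiomatizability obtained in the corollary just before Theorem~\ref{thm:complete-ax-twofam}. With these in hand, decidability will follow by the standard Harrop-style argument.

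More concretely, I would proceed as follows. Fix $n>m\geq 1$ and $h\geq 1$, and consider either class $\clC \in \{\clF(m,n,h),\,\clG(m,h)\}$ with logic $\Lvar = \Log \clC$. By the corollary preceding the theorem, $\Lvar$ admits a finite axiomatization over $\LK{}$ (explicit in both cases), so the set of theorems of $\Lvar$ is recursively enumerable: one can mechanically enumerate all derivations in the Hilbert-style system given by $\LK{}$ together with the two axioms $\{\Di^{m+1}p\imp \Di^{\leq m}p,\, B_h(m)\}$ (respectively $\{\Di^n p \imp \Di^m p,\, B_h(n-1)\}$) using modus ponens, substitution, and the monotonicity rule. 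Simultaneously, one enumerates all finite frames in $\clC$; the membership of a finite frame $(W,R)$ in $\clC$ is checkable because each of the defining conditions (pretransitivity / $(m,n)$-property, and height $\leq h$) is a first-order property of a finite structure, so it reduces to a finite verification. For each such finite frame and each valuation, one can check whether the formula $\vf$ under scrutiny is refuted.

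For a given $\vf$, run both enumerations in parallel. By Theorem~\ref{thm:fmp-two-classes}, $\clC$ has the finite model property, hence $\vf \notin \Lvar$ iff $\vf$ is refuted in some finite frame from $\clC$; while $\vf \in \Lvar$ iff $\vf$ is eventually derived. Exactly one of these two events must occur in finite time, giving a decision procedure.

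The main obstacle is essentially bookkeeping rather than substance: one must verify that the defining conditions of $\clF(m,n,h)$ and $\clG(m,h)$ are effectively checkable on a finite frame. This is routine, since $R^{\leq m}$, $R^n \subseteq R^m$, and the height (computable from the skeleton $skF$) are all decidable functions of the finite relation $R$. If one wishes to avoid Harrop's theorem entirely, one can instead invoke the explicit bound $f(x) = \exp^h_2\bigl((N+h+1)(\log_2 x + N)\bigr)$ from Theorem~\ref{thm:finite_depth_part} together with Lemmas~\ref{lemma:finite-clusters-part} and~\ref{lemma:finite-clustersMtrans-sat}: this yields a concrete, though nonelementary, upper bound on the size of a refuting frame, reducing the decision problem to finite search.
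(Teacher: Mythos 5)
Your proposal is correct and follows essentially the same route as the paper: the paper's proof is exactly the observation that these logics are finitely axiomatizable (by the preceding corollary) and have the finite model property (Theorem~\ref{thm:fmp-two-classes}), whence decidability follows by the standard Harrop argument. The extra bookkeeping you supply (effectiveness of recognizing finite frames in $\clF(m,n,h)$ and $\clG(m,h)$, and the alternative explicit size bound) is consistent with the paper but left implicit there.
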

\begin{proof}
Логики этих классов задаются конечным множеством аксиом и, в силу теоремы \ref{thm:fmp-two-classes}, являются финитно аппроксимируемыми.
\end{proof}

\section{Некоторые следствия}\label{sect:cor}
Конечность шкалы равносильна тому, что конечны её высота, все слои её остова и число точек в каждом из её сгустков.
%Для того, чтобы доказать финитную аппроксимируемость логики
%$(m,n)$-шкал,  нужно для данной выполнимой формулы уметь строить $(m,n)$-шкалу конечной высоты, с
%конечным числом точек на каждом слое и с конечным числом точек в каждом из сгустков.
При условии, что предтранзитивная $(m,n)$-шкала имеет конечную высоту,
для данной выполнимой формулы
остальные параметры позволяют ограничить теорема \ref{thm:finite_depth_part} и лемма
\ref{lemma:finite-clusters-part}.  %Таким образом, финитная аппроксимируемость $\Log\clF(m,n)$ равносильна её равенству $\Log\clF_*(m,n)$.
\begin{corollary*} Пусть $n > m\geq 1$.
\begin{itemize}
\item $\Log\clF(m,n)$ финитно аппроксимируема тогда и только тогда, когда она совпадает с $\Log\clF_*(m,n)$.
\item $\Log\clG(m)$ финитно аппроксимируема тогда и только тогда, когда она совпадает с $\Log\clG_*(m)$.
\end{itemize}
\end{corollary*}

Какие свойства логик $\Log\clF_*(m,n)$ и $\Log\clG_*(m)$ мы можем указать, кроме установленной финитной аппроксимируемости?
В частности, обладают ли они конечной аксиоматикой и являются ли они разрешимыми?

\bigskip
%\subsection{Сложность}
\ISH{Шеф рекомендовал ввести понятие эффективной ФА. Тут есть тонкость --- ниже говорится об эффективности не для всех шкал логики, а для класса}
Финитная аппроксимируемость может дать разрешимость модальной логики и без установления её полной конечной аксиоматики: $\Log \clF$ разрешима,
если существует вычислимая функция $f$ такая, что
любая выполнимая в $\clF$ формула $\vf$ выполнима в шкале из $\clF$, размер которой не превышает $f(\l(\vf))$,
и проблема принадлежности конечной шкалы классу $\clF$ разрешима.  Последнее имеет место, если, например,  класс шкал задаётся конечным множеством модальных формул или предложений первого порядка. Ограничение на размер шкал для классов, допускающих $f$-ограниченные минимальные фильтрации, даёт предложение \ref{prop:admitsfiltr-fmp}. Найдем такие ограничения для классов $\clF(m,n,h)$ и $\clG(m,h)$, $n>m\geq 0$, $h\geq 1$.

\begin{proposition}\label{prop:filtr-compose}
Пусть $F=(W,R)$ --- шкала, $\clA$ --- разбиение $W$, $\clB$ --- разбиение $\clA$ и
$\clC=\{\cup B\mid B\in \clB\}$.
%$G=F_\clA$.
Тогда шкалы
$(F_\clA)_\clB$ и $F_\clC$ изоморфны.
\end{proposition}
\begin{proof}
Очевидно, что $\clC$ --- разбиение $W$, и отображение $B\mapsto \cup B$ ---
биекция между $\clB$ и $\clC$.

%По определению, $G=(\clA,R_\clA)$, $G_\clB=(\clB,(R_\clA)_\clB)$, $F_\clC=(\clC,R_\clC)$. Проверим формально, что для любых $B_1,B_2\in\clB$ имеем
%$B_1~(R_\clA)_\clB~B_2 \iff (\cup B_1) ~R_\clC ~(\cup B_2)$:

По определению, $F_\clA=(\clA,R_\clA)$, $(F_\clA)_\clB=(\clB,(R_\clA)_\clB)$, $F_\clC=(\clC,R_\clC)$. Проверим формально, что для любых $B_1,B_2\in\clB$ имеем
$B_1~(R_\clA)_\clB~B_2 \iff (\cup B_1) ~R_\clC ~(\cup B_2)$:

\begin{align*}
B_1~ (R_\clA)_\clB~B_2 &~\iff~ \EE A_1\in B_1 ~\EE A_2\in B_2 ~ A_1 R_\clA A_2 \\
&~\iff~ \EE A_1\in B_1 ~\EE A_2\in B_2 ~ (\EE x_1\in A_1~ \EE x_2\in A_2~  x_1 R x_2) \\
&~\iff~ \EE x_1\in \cup B_1~ \EE x_2 \in \cup B_2 ~ x_1 R x_2 \\
&~\iff~ (\cup B_1)~ R_\clC ~(\cup B_2).\qedhere
\end{align*}
\end{proof}

\begin{theorem}\label{thm:admits-two-classes}
Классы $\clF(m,n,h),~ \clF_*(m,n),~\clG(m,h),~\clG_*(m)$ допускают минимальные фильтрации
для всех $n>m \geq 1,~h\geq 1$. Более того, существуют полиномы $p(x,y,z)$ и $q(x,y)$ такие, что
при всех $n>m \geq 1,~h\geq 1$
класс $\clF(m,n,h)$ допускает $f$-ограниченные минимальные фильтрации для
$$f(x)=\exp^h_2(p(x,h,n-m)),$$
класс $\clG(m,h)$ допускает $f$-ограниченные минимальные фильтрации для
$$f(x)=\exp^h_2(q(x,h)).$$
\end{theorem}
\begin{proof}
Пусть $F\in \clF(m,n,h)$, $\clA_0$ --- конечное разбиение $F$,  $\clA$ --- его измельчение такое, что
$F_\clA\in \clF(m,n,h)$ и мощности сгустков в $F_\clA$ ограничены $(n-m)|\clA_0|$ (лемма \ref{lemma:finite-clusters-part}).
%Рассмотрим конечное разбиение $\clA$, индуцированное $\clA_0$:
Рассмотрим разбиение $\clA$
$$\clB_0=\{~\{U\in \clA\mid U\subseteq V\}\mid V\in \clA_0\}.$$
%для $A\in \clA_0$ положим $\ov{A}=\{U\in \clA\mid U\subseteq A\}$, $\clB_0=\{\ov{A}\mid A\in A_0\}$.
Очевидно, $|\clA_0|=|\clB_0|$.
Пусть $\clB$ --- правильное конечное измельчение $\clB_0$ в шкале $F_\clA$, существующее по теореме \ref{thm:finite_depth_part}.
В силу предложений \ref{prop:franz-pmorph} и \ref{prop:twoclasses-defined}, $(F_\clA)_\clB\in \clF(m,n,h)$.
По предложению \ref{prop:filtr-compose}, построенная шкала изоморфна $F_\clC$ для $\clC=\{\cup B\mid B\in \clB\}$.
Осталось заметить, что $\clC$ --- измельчение $\clA_0$.

Если $x$ --- размер начального конечного разбиения $\clA_0$, то в $F_\clA$ все сгустки ограничены $(n-m)x$ и
размер построенной шкалы $(F_\clA)_\clB$ ограничен
$$\exp^h_2\bigl( ((n-m)x+h+1)((n-m)x+\log_2 x)\bigr).$$
%
%Предложение \ref{prop:filtr-compose} позволяет для данного конечного разбиения построить фильтрацию шкалы из $\clF(m,n,h)$, последовательно применив теоремы %\ref{thm:finite-clusters-part} и \ref{thm:finite_depth_part}. При этом построенная шкала по-прежнему находится в классе $\clF(m,n,h)$ в силу предложения %\ref{prop:twoclasses-defined}. Таким образом, классы $\clF(m,n,h)$ и $\clF_*(m,n)$ допускают минимальные фильтрации.
%Заметим ещё, что в шкале, полученной после применения теоремы \ref{thm:finite-clusters-part}, все сгустки ограничены $(n-m)x$, где $x$ --- размер исходного %разбиения.

Для классов $\clG(m,h)$ и $\clG_*(m)$ рассуждение проводится аналогично, с использованием леммы \ref{lemma:finite-clustersMtrans-sat} вместо леммы \ref{lemma:finite-clusters-part}.
Размер построенной шкалы ограничен
\[\exp^h_2\bigl( (x+h+1)(x+\log_2 x)\bigr).\qedhere
\]
\end{proof}

Из этой теоремы следует разрешимость логик классов $\clF(m,n,h)$ и $\clG(m,h)$ для данных
$n>m\geq 1,~ h\geq 1$ за элементарное по Кальмару время (время работы разрешающего алгоритма ограничено некоторой башней экспонент).
\begin{question} Для  $n>m \geq 2,~h\geq 1$,
какова сложность логик классов $\clF(m,n,h)$ и $\clG(m,h)$?
\end{question}

Никакой эффективной оценки на размер фильтрации в классах $\clF_*(m,n)$ и $\clG_*(m)$ доказанная теорема не даёт, и вопрос о разрешимости их логик открыт при $n>m>1$.

\begin{question} Для  $n>m \geq 2$, разрешимы ли логики классов $\clF_*(m,n)$ и $\clG_*(m)$? являются ли они конечно аксиоматизируемыми?
\end{question}
Отрицательный ответ на один из этих вопросов будет означать отсутствие финитной аппроксимируемости логик $\clF(m,n)$ или $\clG(m)$.

Нам представляется вполне возможным, что %даже в случае положительного ответа на вторую часть вопроса,
логики классов $\clF_*(m,n)$ и $\clG_*(m)$ как минимум не являются элементарными по Кальмару,
а логики классов $\clF(m,n,h)$ и $\clG(m,h)$  \mbox{$(h-1)$-EXPTIME-трудны}.
\ISH{стоит ли фантазировать?}

\bigskip
Для $n>m>1$, $h\geq 1$, суммируем известные факты и открытые вопросы о финитной аппроксимируемости, разрешимости и конечной аксиоматизируемости:
\begin{center}
%\begin{table}
\begin{tabular}{|c||c|c|c|}
\hline
&$\clF(m,n,h)$, $\clG(m,h)$&$\clF_*(m,n)$, $\clG_*(m)$& $\clF(m,n)$, $\clG(m)$\\ \hline
{ФА}  &  + &  +& ?\\ \hline
Разрешимость  &  + & ?& ? \\ \hline
КА  &  + & ?& +  \\ \hline
\end{tabular}
%\end{table}
\end{center}

%\subsection{Логики с обратными модальностями}
\bigskip
Приведем ещё одно следствие полученных результатов.

Формулы языка с $k$ модальностями интерпретируются в шкалах с $k$ отношениями:
$$\mM,w\mo \Di_i \vf  ~\iff ~  \EE v (wR_iv \textrm{ и } M,v\mo \vf).$$

Рассмотрим язык с двумя модальностями. Для класса $\clF$, состоящего из шкал с одним отношением, положим
$$\temp{\clF}:=\{(W,R,R^{-1})\mid (W,R) \in \clF\}.$$

В силу недавних результатов \cite[Theorem 2.7]{KSZ:2014:AiML}, фильтруемость класса шкал $\clF$ влечёт финитную аппроксимируемость не только $\Log\clF$, но и
$\Log \temp{\clF}$.
\begin{corollary} Пусть $n>m \geq 1,~h\geq 1$.
Логики классов $$\temp{\clF(m,n,h)},~ \temp{\clF_*(m,n)},~\temp{\clG(m,h)},~\temp{\clG_*(m)}$$   финитно аппроксимируемы.
\end{corollary}
Известно, что если $\Lvar$ --- каноническая логика, и $\clF$ --- класс всех шкал $\Lvar$, то логика
$\Log\temp{\clF}$ получается из $\Lvar$
%получается из одномодальной
добавлением двух аксиом $p\imp \neg \Di_1\neg \Di_2 p$, $p\imp \neg \Di_2\neg \Di_1 p $.
\begin{corollary} При $n>m \geq 1,~h\geq 1$
логики классов $\temp{\clF(m,n,h)},~ \temp{\clG(m,h)}$ разрешимы.
\end{corollary}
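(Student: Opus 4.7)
The plan is to derive a finite axiomatization of each tense logic by packaging together three ingredients already in hand: the explicit axiomatization of the mono-modal logics from the corollary at the end of Section~\ref{sect:ax}, canonicity of those logics, and the transfer principle sketched in the remark immediately preceding the statement.

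First, I would invoke the preceding corollary to write $\Log \clF(m,n,h) = \LK{} + \{\Di^n p\to \Di^m p,\, B_h(n-1)\}$ and $\Log \clG(m,h) = \LK{} + \{\Di^{m+1}p\to \Di^{\leq m}p,\, B_h(m)\}$. Next I would verify that both logics are canonical. The pretransitivity axioms $\Di^n p\to \Di^m p$ and $\Di^{m+1}p\to \Di^{\leq m}p$ are of Sahlqvist form, so the base logics $\LK{}+\{\Di^n p\to \Di^m p\}$ and $\LK{}+\{\Di^{m+1}p\to \Di^{\leq m}p\}$ are canonical by Proposition~\ref{prop:sahlq-compl} together with the standard canonicity theorem. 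The height-bounding extension by $B_h$ preserves canonicity in the pretransitive setting, which is exactly the content of Theorem~\ref{thm:compl}. Thus both $\Log \clF(m,n,h)$ and $\Log \clG(m,h)$ are canonical; in particular, by Proposition~\ref{prop:twoclasses-defined} and canonicity, the class of all frames of each logic coincides with $\clF(m,n,h)$ (respectively $\clG(m,h)$).

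Having set this up, I would apply the principle stated in the remark just before the corollary: whenever $\Lvar$ is a canonical mono-modal logic and $\clF$ is the class of all $\Lvar$-frames, the tense logic $\Log \temp{\clF}$ is axiomatized over $\Lvar$ by the two conversion axioms $p\imp \neg \Di_1\neg \Di_2 p$ and $p\imp \neg \Di_2\neg \Di_1 p$ (after interpreting the axioms of $\Lvar$ as axioms for each of $\Di_1$ and $\Di_2$). Plugging in $\Lvar=\Log \clF(m,n,h)$ and $\Lvar=\Log \clG(m,h)$ gives the required finite axiomatizations of $\Log\temp{\clF(m,n,h)}$ and $\Log\temp{\clG(m,h)}$.

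The main obstacle is the transfer principle itself, which is only sketched in the paper: one must verify that the canonical frame of the bi-modal theory obtained by adding the two conversion axioms satisfies $R_2 = R_1^{-1}$, so that its mono-modal $R_1$-reduct is a canonical-style $\Lvar$-frame and hence, by assumption, lies in $\clF$. Once this is done, soundness is immediate and completeness follows because any bi-modal frame with $R_2=R_1^{-1}$ and whose $R_1$-reduct lies in $\clF$ belongs to $\temp{\clF}$. The remaining ingredients --- explicit axiomatization, canonicity, and matching the defining class to the class of all frames --- are already assembled in the preceding sections, so the proof itself reduces to invoking them in sequence.
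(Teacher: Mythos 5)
There is a gap, and it is that you have proved a different statement: the corollary asserts that the logics $\Log\temp{\clF(m,n,h)}$ and $\Log\temp{\clG(m,h)}$ are \emph{decidable}, whereas your argument stops at their \emph{finite axiomatizability} and never mentions decidability at all. The axiomatizability half is assembled correctly and in the way the paper intends: the mono-modal logics are finitely axiomatized (corollary to Theorem~\ref{thm:compl}), they are canonical (the Sahlqvist pretransitivity axiom plus Theorem~\ref{thm:compl} for the $B_h$ extension), their classes of all frames are exactly $\clF(m,n,h)$ and $\clG(m,h)$ (Proposition~\ref{prop:twoclasses-defined}), and the remark preceding the corollary transfers the finite axiomatization to the tense logics via the two conversion axioms. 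You are also right that the only genuinely unproved ingredient is the transfer principle itself, which the paper merely states. But none of this is yet the claimed conclusion.

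To close the gap you need one more step, both pieces of which are already on the page: the immediately preceding corollary (obtained from \cite[Theorem 2.7]{KSZ:2014:AiML}) gives the finite model property of $\Log\temp{\clF(m,n,h)}$ and $\Log\temp{\clG(m,h)}$, and a finitely axiomatizable logic with the finite model property is decidable --- Harrop's theorem, invoked by the paper in exactly this way just before Theorem~\ref{thm:complete-ax-twofam}. Combining that with the finite axiomatizability you established yields the decidability claimed. This also explains why the corollary is restricted to the height-bounded families and omits $\temp{\clF_*(m,n)}$ and $\temp{\clG_*(m)}$: for those the fmp is known but a finite axiomatization is not, so the Harrop argument does not apply.
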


Несмотря на установленные теоремы о финитной аппроксимируемости, логики предтранзитивных шкал конечной высоты устроены существенно сложнее, чем соответствующие расширения логики предпорядков $\LS{4}$. Так,
все логики $\LS{4}+\{B_h\}$ локально табличны,
%\cite{Max1975}, 
логика
$\LS{4}+\{B_1\}=\LS{5}$ предтаблична.
% (см. например, \cite{Ch:Za:ML:1997}). 
 Однако ни локальная табличность, ни предтабличность не имеют места уже для $\Log\clG(2,1)$, поскольку у этой логики
имеются неполные по Крипке расширения
\cite{miyazaki2004normal,Kostrzycka:NEXTKTB:2008}
\ISH{Так всё же следует ли верить их результатам?}. Это же верно для всех логик  $\Log\clF(m,n,h)$, $\Log\clG(m,h)$, $n>m\geq 2$, $h\geq 1$, поскольку они включены в $\Log\clG(2,1)$.

\bigskip
\ISH{Что где получено - надо ли указывать? В частности, теоремы \ref{thm:Glivenko} и \ref{thm:compl} были опубликованы в сборнике ИТИС}

\ISH{Благодарности.}

\hide{
Перейти от изучения предтранзитивных логик высоты 1 к случаю логик произвольной конечной высоты  авторам предложил В.Б. Шехтман, за что авторы выражают ему свою признательность. } 

\bibliographystyle{apalike}
%\providecommand\bibsection{}
%\renewcommand{\bibsection}{\section{\refname}}

%\clearpage
%\bibliography{pretr-bib}

\begin{thebibliography}{BCM{\etalchar{+}}03}

\bibitem[Мак75]{Max1975}
Л.Л. Максимова.
\newblock Модальные логики конечных слоев.
\newblock {\em Алгебра и логика}, 14(3):304--319, 1975.

\bibitem[BCM{\etalchar{+}}03]{BaaderDL2003}
Franz Baader, Diego Calvanese, Deborah~L. McGuinness, Daniele Nardi, and
  Peter~F. Patel-Schneider, editors.
\newblock {\em The Description Logic Handbook: Theory, Implementation, and
  Applications}.
\newblock Cambridge University Press, 2003.

\bibitem[BdRV02]{blackburn_modal_2002}
P.~Blackburn, M.~de~Rijke, and Y.~Venema.
\newblock {\em Modal Logic}.
\newblock Cambridge University Press, 2002.

\bibitem[BJ51]{JonssonTarski}
Alfred~Tarski Bjarni~J\'{o}nsson.
\newblock Boolean algebras with operators. part i.
\newblock {\em American Journal of Mathematics}, 73(4):891--939, 1951.

\bibitem[Boo95]{BoolosBook}
George Boolos.
\newblock {\em The Logic of Provability}.
\newblock Cambridge University Press, 1995.

\bibitem[CZ97]{Ch:Za:ML:1997}
Alexander Chagrov and Michael Zakharyaschev.
\newblock {\em Modal Logic}, volume~35 of {\em Oxford Logic Guides}.
\newblock Oxford University Press, 1997.

\bibitem[G\"86]{Goedel33}
Kurt G\"{o}del.
\newblock Eine interpretation des intuitionistischen aussagenkalk\"{u}ls. {\it
  ergebnisse eines mathematischen kolloquiums}, 4:39–40, 1933. english
  translation, with an introductory note by a. s. troelstra.
\newblock In {\em Kurt G\"{o}del: Collected Works. Volume I: Publications
  1929-1936}, pages 296--301. Oxford University Press, 1986.

\bibitem[Gab72]{Gabbay:1972:JPL}
Dov~M. Gabbay.
\newblock A general filtration method for modal logics.
\newblock {\em Journal of Philosophical Logic}, 1(1):29--34, 1972.

\bibitem[GO07]{ModalMogel}
Valentin Goranko and Martin Otto.
\newblock Model theory of modal logic.
\newblock In {\em Handbook of Modal Logic}, pages 249--329. Elsevier, 2007.

\bibitem[GSS09]{ShefSkvGab}
D.~Gabbay, V.~Shehtman, and D.~Skvortsov.
\newblock {\em Quantification in Nonclassical Logic}.
\newblock Elsevier, 2009.

\bibitem[Jan68]{JankovContin68}
V.A. Jankov.
\newblock The construction of a sequence of strongly independent
  superintuitionistic propositional calculi.
\newblock {\em Soviet Mathematics Doklady}, (9):806--807, 1968.

\bibitem[Jan94]{jansana1994}
Ramon Jansana.
\newblock Some logics related to von {W}right's logic of place.
\newblock {\em Notre Dame J. Formal Logic}, 35(1):88--98, 01 1994.

\bibitem[Kos08]{Kostrzycka:NEXTKTB:2008}
Zofia Kostrzycka.
\newblock On non-compact logics in {NEXT(KTB)}.
\newblock {\em Mathematical Logic Quarterly}, 54(6):617--624, 2008.

\bibitem[KS11]{Ku:Shap:TACL:2011}
Andrey Kudinov and Ilya Shapirovsky.
\newblock Finite model property of pretransitive analogs of {S5}.
\newblock In {\em Topology, Algebra and Categories in Logic (TACL 2011)}, pages
  261--264. Marseille, 2011.

\bibitem[KSZ14]{KSZ:2014:AiML}
Stanislav Kikot, Ilya Shapirovsky, and Evgeny Zolin.
\newblock Filtration safe operations on frames.
\newblock In {\em Advances in Modal Logic}, volume~10, pages 333--352. College
  Publications, 2014.

\bibitem[LL32]{Lewis-Langford}
Clarence~Irving Lewis and Cooper~Harold Langford.
\newblock {\em Symbolic logic}.
\newblock Century Co., 1932.

\bibitem[LS66]{LemmonScott1966}
Edward~John Lemmon and Dana Scott.
\newblock {\em Intensional Logics}.
\newblock Stanford, 1966.

\bibitem[Mat55]{Mats-s4s5}
K.~Matsumoto.
\newblock Reduction theorem in {L}ewis's sentential calculi.
\newblock {\em Mathematica Japonicae}, 3:133--135, 1955.

\bibitem[McK41]{McKinsey41}
J.~C.~C. McKinsey.
\newblock A solution of the decision problem for the lewis systems s2 and s4,
  with an application to topology.
\newblock {\em The Journal of Symbolic Logic}, 6(4):117--134, 1941.

\bibitem[Miy04]{miyazaki2004normal}
Yutaka Miyazaki.
\newblock Normal modal logics containing {KTB} with some finiteness conditions.
\newblock {\em AiML-2004: Advances in Modal Logic}, page 264, 2004.

\bibitem[MT48]{McKinsey-Tarski48}
J.~C.~C. McKinsey and Alfred Tarski.
\newblock Some theorems about the sentential calculi of lewis and heyting.
\newblock {\em Journal of Symbolic Logic}, 13(1):1--15, 1948.

\bibitem[Seg68]{Segerberg1968}
Krister Segerberg.
\newblock Decidability of four modal logics.
\newblock {\em Theoria}, 34:21--25, 1968.

\bibitem[Seg71]{Seg_Essay}
Krister Segerberg.
\newblock {\em An Essay in Classical Modal Logic, Volume 3}.
\newblock Filosofska Studier, vol.13. Uppsala Universitet, 1971.

\bibitem[Seg73]{Franz-Bull}
Krister Segerberg.
\newblock Franzen's proof of {B}ull's theorem.
\newblock {\em Ajatus}, 35:216--221, 1973.

\bibitem[SF96]{SmullFitt}
Raymond~M. Smullyan and Melvin Fitting.
\newblock {\em Set Theory and the Continuum Problem}.
\newblock Oxford University Press, 1996.

\bibitem[She04]{shehtman2004filtration}
Valentin~B. Shehtman.
\newblock Filtration via bisimulation.
\newblock {\em Advances in Modal Logic}, 5:289--308, 2004.

\bibitem[Sol76]{Solovay76}
Robert~M. Solovay.
\newblock Provability interpretations of modal logic.
\newblock {\em Israel Journal of Mathematics}, 25(3-4):287--304, 1976.

\bibitem[Var07]{Vardi06automata}
Moshe~Y. Vardi.
\newblock Automata-theoretic techniques for temporal reasoning.
\newblock In {\em In Handbook of Modal Logic}, pages 971--989. Elsevier, 2007.

\bibitem[Ven95]{venema-cyl1995}
Yde Venema.
\newblock Cylindric modal logic.
\newblock {\em Journal of Symbolic Logic}, 60(2):591--623, 1995.

\bibitem[WZ07]{ModalDecWZ}
Frank Wolter and Michael Zakharyaschev.
\newblock Modal decision problems.
\newblock In {\em Handbook of Modal Logic}, pages 42--489. Elsevier, 2007.

\end{thebibliography}

\newcommand{\etalchar}[1]{$^{#1}$}

\end{document}